\newif\ifisarxive
\isarxivetrue 

\ifisarxive
    \documentclass{article}
    \pdfoutput=1
    \usepackage{arxiv}
\else
    \documentclass[12pt]{article}
    \usepackage[letterpaper, margin=1in]{geometry}
\fi

\usepackage{setspace}

\usepackage{graphicx} 

\usepackage{pgfgantt}
\usepackage{tikz}
\usepackage{diagbox}

\usepackage{listings}

\usepackage{pdflscape}
\usepackage{rotating}

\lstdefinelanguage{Macaulay2}{
  morekeywords={
    QQ,ZZ,RR,ideal,dim,gens,degree,rank,
    matrix,det,trace,transpose,
    map,apply,select,
    substitute, jacobian
  },
  sensitive=true,
  morecomment=[l]{--},
  morestring=[b]"
}

\usepackage[utf8]{inputenc}
\usepackage{amsmath, amsthm, amssymb, color, mathbbol}
\numberwithin{equation}{section}
\usepackage{graphicx}
\usepackage{makecell}
\usepackage{multicol}
\usepackage{url}

\usepackage{parskip}
\usepackage[table,xcdraw]{xcolor}
\usepackage{tikz}
\usetikzlibrary{decorations.text}
\usepackage{array,amscd}
\usepackage{amsfonts}
\usepackage[T1]{fontenc}
\usepackage{appendix}
\usepackage[arrow, matrix, curve]{xy}
\usepackage{natbib}
\usepackage{booktabs}
\usepackage{siunitx}
\usepackage{multirow}
\usepackage[font=small,labelfont=bf]{caption}
\usepackage{caption}
\usepackage{subcaption}
\usepackage{blkarray}
\usepackage{algorithm}
\usepackage{algpseudocode}
\usepackage{tcolorbox}
\usepackage{hyperref}

\usepackage{tikz}
\usetikzlibrary{snakes}
\usetikzlibrary {shapes}
\usetikzlibrary {arrows}
\usetikzlibrary {positioning}
\usetikzlibrary {calc}

\usepackage{tikz-cd}
\usetikzlibrary{positioning, arrows.meta, fit}

\usepackage{pifont}

\usepackage[nameinlink, noabbrev]{cleveref}
\usepackage[shortlabels]{enumitem}
\usepackage{booktabs}
\usepackage{caption}
\usepackage{subcaption}

\newtheorem{theorem}{Theorem} 
\newtheorem{definition}{Definition}
\newtheorem{corollary}{Corollary}
\newtheorem{proposition}{Proposition}
\newtheorem{lemma}{Lemma}

\newtheorem{example}{Example}
\theoremstyle{definition}
\newenvironment{extprop}[1]
{\par\medskip
 \noindent\textbf{Proposition #1. }\normalfont}
{\par\medskip}

\newenvironment{extthm}[1]
{\par\medskip
 \noindent\textbf{Theorem #1. }\normalfont}
{\par\medskip}

\newcommand{\RR}{\mathbb{R}}

\newcommand{\CC}{\mathbb{C}}

\newcommand{\EE}{\mathbb{E}}
\newcommand{\balpha}{\boldsymbol{\alpha}}

\definecolor{darkgreen}{rgb}{0,0.4,0}
\definecolor{MyBlue}{rgb}{0,0.08,0.7} 
\definecolor{MyRed}{rgb}{0.85,0.08,0}

\DeclareMathOperator{\rk}{rank}

\DeclareMathOperator{\ci}{\perp\kern-1.3ex\perp}
\DeclareMathOperator{\nci}{\not\kern-0.3ex\ci}

\makeatletter
\def\blfootnote{\gdef\@thefnmark{}\@footnotetext}
\makeatother

\usepackage[activate={true,nocompatibility}, 
            final,              
            tracking=true,
            factor=1100,        
            stretch=10,         
            shrink=10]{microtype}
\SetTracking{encoding={*}, shape=sc}{40}

\title{Identifiability of partial-mastery cognitive diagnostic models}

\ifisarxive
    \renewcommand{\shorttitle}{Identifiability of PM-CDMs}
\fi

\author{Jun Wu$^{1}$, Patrícia Martinková$^{1,2}$, Elena Erosheva$^{3}$\\
\small $^{1}$ Institute of Computer Science of the Czech Academy of Sciences, Prague, Czech Republic\\
\small $^{2}$ Faculty of Education, Charles University, Prague, Czech Republic\\
\small $^{3}$ Department of Statistics, School of Social Work, and The Center for Statistics and the Social Sciences, \\University of Washington, Seattle, WA, USA\\
}

\date{July 4, 2026}

\begin{document}
\doublespacing

\maketitle

\begin{abstract}
Partial-mastery (PM) cognitive diagnostic models (CDMs) extend traditional CDMs by replacing binary latent attribute mastery indicators with continuous mastery scores for multiple latent attributes. In PM-CDMs, each subject is characterized by a fixed continuous latent mastery vector, from which item-specific binary attribute profiles are independently generated. This formulation provides a bridge between classical CDMs and continuous latent variable models.

Despite growing interest in PM-CDMs, their identifiability properties remain unexplored. In this work, we establish the first identifiability results for PM-CDMs. We derive sufficient conditions for identifiability that are direct analogues of established conditions for traditional CDMs. To develop the main argument, we use symbolic computation on a minimal example with five items and two latent attributes to show that the Jacobian of the model parameterization is generically nonzero. Combining tools from real analysis and algebraic statistics, we prove that this local property implies generic finite-to-one identifiability of the item parameters and the marginal distributions of the relevant latent attributes. We further show that if the $Q$-matrix contains such identifiable local structures for all attribute pairs, identifiability extends to the full PM-CDM. These findings provide a rigorous theoretical foundation for estimation and inference in partial-mastery cognitive diagnostic models.
\end{abstract}
\newpage


\section{Introduction}
Cognitive diagnostic models (CDMs) 
are widely used in educational and psychological assessment to provide fine-grained information about individuals' levels of specific attributes \citep{leighton2007cognitive, junker2001cognitive, haertel1989using}. A key feature of CDMs is their interpretability, as the latent attributes correspond to a set of predefined skills or traits. 
In this respect, CDMs align with a broader psychometric objective of moving beyond aggregate test scores toward interpretable representations of the latent constructs underlying observed responses   \citep{martinkova2023computational, rao2007psychometrics}.

Standard CDMs typically assume that each latent attribute is binary, indicating either mastery or non-mastery. Although this assumption facilitates model interpretation and estimation, it may be restrictive in practice. In many applications, mastery is better viewed as a continuum, and the distinction between mastery and non-mastery may be ambiguous. To address this limitation, partial-mastery CDMs (PM-CDMs) were proposed by \citet{shang2021partial}, allowing latent attributes to take continuous values between 0 and 1. This extension substantially increased modeling flexibility by accommodating intermediate levels of proficiency. However, it also introduced a more complex latent structure, expanding the latent space and raising new theoretical challenges, particularly with respect to identifiability. 

Identifiability is a fundamental property of statistical models that concerns whether model parameters can be uniquely recovered from the observed data distribution. Without identifiability, distinct parameter values may generate the same distribution of observed responses, rendering statistical estimation and inference ambiguous.

The identifiability of CDMs has been studied extensively. Existing results include conditions based on the number of items and latent attributes through Kruskal’s tensor decomposition theorem \citep{allman2009identifiability}, as well as structural conditions on the $Q$-matrix derived using linear algebraic techniques \citep{xu2017identifiability}. More recent work has further relaxed 
these structural requirements and established identifiability not only of the model parameters but also of the number of latent attributes and the $Q$-matrix itself \citep{lee2025identifiability}.

Despite these advances, existing identifiability results rely fundamentally on the assumption that the latent attributes are binary. Extending these results to PM-CDMs is nontrivial because the introduction of continuous latent mastery variables fundamentally alters the model structure. From another perspective, PM-CDM can be viewed as grade-of-membership models for item response data \citep{erosheva2002alternative,erosheva2005comparing,gu2023dimension} subject to additional structural constraints on the latent class probabilities. However, identifiability theory for such modes remains incomplete, and existing results do not directly yield identifiability of PM-CDMs. Consequently, the identifiability properties of PM-CDMs remain largely unexplored.

The present paper addresses this gap by developing the first identifiability theory for PM-CDMs. We derive sufficient conditions for identifiability that are direct analogues of well-established conditions for traditional CDMs, thereby extending classical identifiability theory to the partial-mastery setting. 
The paper proceeds as follows. In Section 2, we review the formulations of CDMs and PM-CDMs. In Section 3, we introduce the identifiability framework and review the notions of global, generic, and local identifiability. We then summarize key identifiability results for restricted latent class models (RLCMs) and show that PM-CDMs can be represented as highly overparametrized RLCMs, while demonstrating why existing RLCM identifiability results do not directly apply in this setting. Section 4 presents the main identifiability results and their proofs. In Section 5, we discuss applications of the theory, showing that our identifiability results extend to the recently proposed additive PM-CDMs and illustrating how they can be used to analyze identifiability in practical settings with well-known $Q$-matrix structures. Finally, Section 6 concludes with a discussion of the implications and limitations of the proposed identifiability results.


\section{Model Formulation}
\subsection{Cognitive Diagnostic Models}
Suppose that a test consists of \(J\) binary-response items measuring \(K\) latent attributes. For each subject, let
\[
\mathbf{y}=(y_1,\dots,y_J)
\]
denote the observed response vector, where \(y_j=1\) indicates a correct response to item \(j\). Each subject is associated with a latent attribute profile
\[
\boldsymbol{\alpha}=(\alpha_1,\dots,\alpha_K)\in\{0,1\}^K,
\]
where \(\alpha_k=1\) indicates mastery of the \(k\)-th attribute and \(\alpha_k=0\) otherwise.

The population distribution of attribute profiles is characterized by the probability vector
\[
\mathbf{p}_{\boldsymbol{\alpha}}
=
\bigl(
p_{\boldsymbol{\alpha}}
:
\boldsymbol{\alpha}\in\{0,1\}^K
\bigr)^T,
\]
satisfying
\[
\sum_{\boldsymbol{\alpha}\in\{0,1\}^K}
p_{\boldsymbol{\alpha}}=1.
\]

Given an attribute profile $\boldsymbol{\alpha}$, the response $y_j$ follows a Bernoulli distribution with success probability \(\theta_{j,\boldsymbol{\alpha}}=P(y_j=1\mid\boldsymbol{\alpha})\).
The response probabilities $\theta_{j,\boldsymbol{\alpha}}$ are constrained by the relationship between items and attributes, encoded by the $Q$-matrix \(Q=(q_{jk})_{J\times K}\), where $q_{jk}=1$ indicates that item $j$ requires attribute $k$. Let $\mathbf q_j$ denote the $j$th row of $Q$, and let \(\mathbf e_k=(0,0\dots,1,\dots,0)\) denote the $k$th standard basis row vector in $\RR^K$. An item $j$ is called a \textit{pure item} if it requires only one latent attribute, that is, if $\mathbf q_j=\mathbf e_k$ for some $k$. Pure items play a central role in the identifiability results developed later.

Common constraints include monotonicity of the response probabilities,
$\theta_{j,\boldsymbol{\alpha}}\ge\theta_{j,\boldsymbol{\alpha}'}$ for $\boldsymbol{\alpha}\succeq\boldsymbol{\alpha}'$, where $\succeq$ denotes element-wise ordering. Another common constraint is invariance with respect to non-required attributes: $\theta_{j,\boldsymbol{\alpha}}=\theta_{j,\boldsymbol{\alpha}'}$
if $
\boldsymbol{\alpha}\odot\mathbf q_j
=
\boldsymbol{\alpha}'\odot\mathbf q_j,
$
where $\odot$ denotes element-wise multiplication. The following strict monotonicity assumption is also important for identifiability: for any latent dimension $k$ and item $j$ with $\mathbf q_j=\mathbf{e}_k$, $\theta_{j,\mathbf e_k}>\theta_{j,\mathbf 0}.$ Throughout the paper, the item parameters are assumed to satisfy these constraints.

Under the conditional independence assumption,
\begin{align*}
{P}(\mathbf{y} \mid \boldsymbol{\alpha})
= \prod_{j=1}^J \theta_{j,\boldsymbol{\alpha}}^{y_j}
(1 - \theta_{j,\boldsymbol{\alpha}})^{1-y_j}.
\end{align*}
The marginal probability of the response vector $\mathbf y$ is obtained by summing over all latent attribute profiles $\balpha$:
\begin{align*}
{P}(\mathbf{y})
= \sum_{\boldsymbol{\alpha} \in \{0,1\}^K}
p_{\boldsymbol{\alpha}}
\prod_{j=1}^J \theta_{j,\boldsymbol{\alpha}}^{y_j}
(1 - \theta_{j,\boldsymbol{\alpha}})^{1-y_j}.
\end{align*}

Among the many cognitive diagnosis models proposed in the literature, the Deterministic Input Noisy output "And" gate (DINA) model \citep{junker2001cognitive} and the Generalized DINA (GDINA) model \citep{de2011generalized} are two of the most widely used. 
For each item, the reduced attribute profile consists of the attributes required by that item. In the DINA model, all reduced attribute profiles that do not master every required attribute share the same response probability, while a different response probability is assigned to the fully mastered profile.
In contrast, the GDINA model assigns a distinct response probability to each reduced attribute profile.

\subsection{Partial-Mastery Cognitive Diagnostic Models}
In standard CDMs, attributes are typically represented as binary mastery indicators, where each attribute is either mastered or not. To allow for intermediate levels of attribute mastery, \citet{shang2021partial} proposed partial-mastery cognitive diagnostic models (PM-CDMs), which generalize binary attribute mastery indicators to continuous mastery levels. In these models, each subject has $K$ latent mastery scores $d_k\in[0,1]$. These scores represent continuous levels of attribute mastery, ranging from not mastered ($d_k=0$) to fully mastered ($d_k=1$), allowing for greater flexibility in modeling intermediate proficiency levels. The latent mastery score vector $\mathbf{d}$ is assumed to follow a Gaussian copula model to capture dependencies among the latent attributes:
\begin{equation}
\left\{z_k:=\Phi^{-1}(d_k);\ k=1,\dots,K\right\}^T\sim N(\boldsymbol{\mu},\boldsymbol{\Sigma}),
\end{equation}
where $\Phi^{-1}$ is the inverse cumulative distribution function of a standard normal distribution. Consequently, each marginal mastery score $d_k$ is uniformly distributed on [0,1], while dependencies among attributes are governed by \((\boldsymbol{\mu},\Sigma)\).

In addition, PM-CDMs allow latent attributes to be probabilistically manifested at the item level according to the mastery score vector \(\mathbf d\).

For a fixed mastery score vector $\mathbf d$, a realized attribute profile
$\boldsymbol{\alpha}\mid\mathbf d$ is generated for each item by independently drawing
\(\alpha_k\sim\mathrm{Bernoulli}(d_k), \ k=1,\dots,K\). 
Therefore,
\[
p_{\boldsymbol{\alpha}\mid\mathbf d}
=
\prod_{k=1}^K
d_k^{\alpha_k}(1-d_k)^{1-\alpha_k}.
\]

The marginal response probability to item $j$ is constructed by averaging
$\theta_{j,\boldsymbol{\alpha}}$ over the conditional distribution of the realized attribute profile given $\mathbf d$:
\begin{equation}\label{theta_jd}
\theta_{j,\mathbf{d}}=\sum_{{\boldsymbol{\alpha}}\in\{0,1\}^K}\theta_{j,{\boldsymbol{\alpha}}}\cdot p_{{\boldsymbol{\alpha}}|\mathbf{d}}.
\end{equation}
Thus, $\theta_{j,\mathbf{d}}$ is a polynomial function of the mastery scores $\mathbf d$.
Here \(\theta_{j,\boldsymbol{\alpha}} = P(Y_j=1\mid\boldsymbol{\alpha})\)
denotes the response probability associated with the latent class
$\boldsymbol{\alpha}\in\{0,1\}^K$ in the corresponding CDM. The PM-DINA and PM-GDINA models inherit the corresponding response probability structures from the DINA and GDINA models, respectively.

The joint response probability under the PM-CDM is then obtained by integrating over the distribution of \(\mathbf d\):
\begin{equation}\label{pmcdm_lh}
P_{\text{PM-CDM}}(\mathbf{y})=\int_{\mathbf{d}\in[0,1]^K}\prod_{j=1}^{J}\theta_{j,\mathbf{d}}^{y_j}(1-\theta_{j,\mathbf{d}})^{1-y_j}dD_{\mu,\Sigma}(\mathbf{d}).
\end{equation}
The model parameters, therefore, consist of the CDM item parameters $\{\theta_{j,\boldsymbol{\alpha}}\}$ and the copula parameters $(\mu,\Sigma).$

Compared with standard CDMs, PM-CDMs introduce a substantially richer latent structure through the continuous mastery variables and the copula parameters $(\boldsymbol{\mu},\boldsymbol{\Sigma})$. This additional flexibility raises fundamental questions about whether the model parameters can be uniquely recovered from the observed response distribution. The next section formalizes this identifiability problem.


\section{Preliminaries}
\subsection{Notions of Identifiability}
This section reviews the notions of identifiability and the Jacobian criterion for local identifiability.

\begin{definition}(Definition 16.1.1 in \cite{sullivant2018algebraic})
Let the parameter space $\Omega\subseteq\RR^m$ be a semi-algebraic set \footnote{A semi-algebraic set is a subset of Euclidean space defined by finitely many polynomial equalities and inequalities.}, and let $\phi:\Omega\to M$ be a rational parameterization map defined everywhere on $\Omega$, where each component of $\phi$ can be written as a ratio of polynomial functions of the model parameters. Let 
$M=\operatorname{im}(\phi)$.
The parameterization map $\phi$ is said to be
\begin{itemize}
    \item \textbf{globally identifiable} if $\phi$ is one-to-one on $\Omega$;
    \item \textbf{generically identifiable} if
    \(\phi^{-1}(\phi(\theta))=\{\theta\}\)
    for almost all \(\theta\in\Omega\);
    \item \textbf{locally identifiable} if
    \(|\phi^{-1}(\phi(\theta))|<\infty\)
    for almost all $\theta\in\Omega$.
\end{itemize}
\end{definition}

In latent variable models, the parameter vector typically consists of item parameters and parameters governing the distribution of latent variables, denoted by
\[
\theta=(\Theta,\mathbf p).
\]
For models with $J$ binary responses, the parameterization map induces a probability distribution on the simplex $\Delta_{2^J}$,
corresponding to the $2^J$ possible response patterns.
\begin{align}\label{eq_map}
\phi(\Theta,\mathbf p)
=
\bigl(
P(\mathbf Y=\mathbf y \mid \Theta,\mathbf p)
:
\mathbf y\in\{0,1\}^J
\bigr).
\end{align}

In CDMs, the parameterization map is additionally determined by the $Q$-matrix. The identifiability of $Q$ may be studied jointly with that of the item parameters $\Theta$, or under the assumption that $Q$ is known. In this work, we assume that $Q$ is known. Since the form of the parameterization map is determined by the zero pattern of $Q$, we write $\phi$ instead of $\phi_Q$ when no ambiguity arises.

Global identifiability is the strongest notion of identifiability and is often referred to as strict identifiability in the latent variable model literature. Establishing global identifiability is typically challenging, and the proof techniques often depend heavily on the specific model structure. 
For example, identifiability results for factor analysis models rely on matrix-analytic arguments \citep{anderson1956statistical}, whereas proofs for restricted latent class models (RLCMs) involve carefully constructed algebraic manipulations and cancellation arguments \citep{xu2017identifiability}.

Generic identifiability relaxes global identifiability by allowing the exclusion of singular parameter values lying in a measure-zero set. Under this notion, polynomial expressions arising in the parameterization map can be assumed to be nonzero for generic parameter values \citep{okamoto1973distinctness}. 
For example, \citet{gu2024blessing} shows that a class of CDMs is identifiable outside a measure-zero subset corresponding to conditional independence among certain latent attributes.

Local identifiability is the weakest notion among the three. Under this definition, the parameterization map is only required to be locally one-to-one, and the preimage \(\phi^{-1}(\phi(\theta))\) may contain finitely many distinct parameter points in $\mathbb{R}^m$. This notion of local identifiability, commonly used in algebraic statistics, is stronger than the definition appearing in parts of the factor analysis literature, such as \cite{shapiro1985identifiability}. The latter only requires the parameterization map to be locally one-to-one and does not impose finiteness of the preimage.

A standard approach to studying local identifiability is to examine the generic rank of the Jacobian matrix $J(\phi)$.

\begin{extprop}{16.1.7 in \cite{sullivant2018algebraic}}
Let $\Omega\subseteq\RR^m$ and let $\phi$ be a rational map. Then the dimension of the model $\dim(\operatorname{im}\phi)$ is equal to the rank of the Jacobian matrix evaluated at a generic point:
\begin{align*}
J(\phi) = \begin{pmatrix}
\frac{\partial \phi_1}{\partial \theta_1} & \cdots & \frac{\partial \phi_1}{\partial \theta_m}\\
\vdots & \ddots & \vdots\\
\frac{\partial \phi_r}{\partial \theta_1} & \cdots & \frac{\partial \phi_r}{\partial \theta_m}\\
\end{pmatrix}, \quad \dim(\operatorname{im}\phi)
=
\rk J(\phi).
\end{align*}
The parameter vector $\theta$ is locally identifiable if \(\rk(J(\phi))=m\).
\end{extprop}

For example, \cite{perez2023identifiability} analyzed the local identifiability of the simplest globally identifiable cognitive diagnostic model, consisting of \(K=1\) attribute and \(J=3\) items.

\subsection{Marginal $T$-Matrix and Identifiability Results for RLCMs} 
This subsection reviews the marginal $T$-matrix formulation and several identifiability results for RLCMs from \cite{xu2017identifiability}, which we will use in the subsequent analysis.

In a restricted latent class model (RLCM) with $J$ items and $K$ binary latent attributes, the $T$-matrix \(T(Q,\Theta)\) is a $2^J\times 2^K$ matrix indexed by the response vector \(\mathbf r\in\{0,1\}^J\) and the latent attribute profile \(\boldsymbol{\alpha}\in\{0,1\}^K\). 
The \((\mathbf{r}, \boldsymbol{\alpha})\) entry of \(T(Q, \Theta)\), denoted by \(t_{\mathbf{r},\boldsymbol{\alpha}}(Q, \Theta)\), is the marginal probability of the event $\mathbf{y}\succeq\mathbf{r}$ given the latent attribute profile \(\boldsymbol{\alpha}\) under the model specified by \((Q,\Theta)\):
\[
t_{\mathbf{r},\boldsymbol{\alpha}}(Q, \Theta)=P(\mathbf{y}\succeq \mathbf{r}\mid Q,\Theta, \boldsymbol{\alpha}).
\]
For $\mathbf{r}=\mathbf{e}_j$, 
\begin{align*}
t_{\mathbf{e}_j, \boldsymbol{\alpha}}(Q, \Theta)=P(y_j=1\mid Q, \Theta, \boldsymbol{\alpha})=\theta_{j,\boldsymbol{\alpha}}.    
\end{align*}
Let \(T_{\mathbf{r},\cdot}\) denote the row vector \(T(Q,\Theta)\) corresponding to $\mathbf r$. Under the conditional independence assumption given $\boldsymbol{\alpha}$,
\begin{align}\label{T_decomp}
T_{\mathbf{r},\cdot}(Q,\Theta) = \bigodot_{j:o_j=1}T_{\mathbf{e}_j,\cdot}(Q, \Theta).  
\end{align}

Multiplying the $T$-matrix by the latent class probability vector $\mathbf{p}$ yields the marginal probabilities of positive responses to subsets of items:
\begin{align}\label{eq_marginalprob}
\forall \mathbf{r}\in\{0,1\}^{J}, \quad T_{\mathbf{r},\cdot}(Q,\Theta)\mathbf{p} = \sum_{\boldsymbol{\alpha}}t_{\mathbf{r},\boldsymbol{\alpha}}(Q,\Theta)\mathbf{p} = P(\mathbf{y}\succeq \mathbf{r}\mid Q, \Theta, \mathbf{p}).   
\end{align}
There is a one-to-one correspondence between the probabilities \(P(\mathbf{y}\succeq \mathbf{r}\mid Q, \Theta, \mathbf{p})\) and \(P(\mathbf{y}= \mathbf{r}\mid Q, \Theta, \mathbf{p})\) over
\(\mathbf r\in\{0,1\}^J\). Therefore, identifiability can be studied through the equation
\begin{align*}
T(Q,\Theta)\mathbf{p}=T(Q,\overline\Theta)\overline{\mathbf{p}}.
\end{align*}
In particular, to establish global identifiability, it suffices to show that
\begin{align*}
T(Q,\Theta)\mathbf{p}=T(Q,\overline\Theta)\overline{\mathbf{p}}\implies (\Theta, \mathbf{p})=(\overline{\Theta}, \overline{\mathbf{p}}).
\end{align*}

The following proposition serves as a useful technical tool in the proofs of identifiability results:
\begin{extprop}{1 in \cite{xu2017identifiability}}
\((\Theta, \mathbf{p})\) is identifiable if and only if for any \((\Theta, \mathbf{p})\neq (\Theta', \mathbf{p}')\), there exists \(\mathbf{r}\in\{0,1\}^J\) such that 
\begin{align}\label{eq_id1}
T_{\mathbf{r},\boldsymbol{\cdot}}(Q,\Theta) \mathbf{p}\neq T_{\mathbf{r},\boldsymbol{\cdot}}(Q,\overline{\Theta})\overline{\mathbf{p}}.    
\end{align}
\end{extprop}
By analyzing the $T$-matrix, \cite{xu2017identifiability} provides a sufficient condition for the identifiability of \((\Theta, \mathbf{p})\) in RLCMs.
\begin{extthm}{1 in \cite{xu2017identifiability}}
In an RLCM, if the $Q$-matrix takes the form
\[
Q=
\begin{pmatrix}
I_K \\
I_K \\
Q'
\end{pmatrix},
\]
and if, for any \(k\in \{1, \cdots, K\}\), \((\theta_{j,\mathbf{e}_k}; j>2K)^T\neq (\theta_{j,\mathbf{0}}; j>2K)^T\), then \((\Theta, \mathbf{p})\) is globally identifiable.
\end{extthm}
The condition on the items in $Q'$ requires that, for each attribute $k$, there exists at least one item in $Q'$ for which the latent classes $\boldsymbol{\alpha}=\mathbf e_k$ and $\boldsymbol{\alpha}=\mathbf 0$ yield different positive response probabilities; equivalently, the two vectors above differ in at least one coordinate. This condition holds generically whenever $q_{jk}\neq 0$ for some $j>2K$.

The requirement that the $Q$-matrix contains two identity submatrices is among the most influential sufficient conditions in the CDM identifiability literature and serves as a benchmark for the results developed later.

The following proposition plays a key role in the proof of Theorem 1 in \cite{xu2017identifiability} and will also be useful in our subsequent analysis.
\begin{proposition}{(Proposition 3 in \cite{xu2017identifiability})}\label{prop3_xu}
For any \(\boldsymbol{\theta}^*=(\theta_1^*, \cdots, \theta_J^*)^T\in\RR^J\), there exists an invertible matrix \(D(\boldsymbol{\theta}^*)\) depending only on $\boldsymbol{\theta}^*$, such that the matrix \(D(\boldsymbol{\theta}^*)\) is lower triangular with unit diagonal entries, 
and
\[
T(Q,\Theta-\boldsymbol{\theta}^*\mathbf{1}^T) = D(\boldsymbol{\theta}^*)T(Q, \Theta).
\]
\end{proposition}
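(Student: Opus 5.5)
The plan is to prove the statement row by row, using the definition of the $T$-matrix as conditional expectations of products of item indicators. For a row indexed by $\mathbf r$, the entry $t_{\mathbf r,\boldsymbol\alpha}(Q,\Theta)$ is $\prod_{j:r_j=1}\theta_{j,\boldsymbol\alpha}$ by the factorization \eqref{T_decomp}. After the shift $\Theta\mapsto\Theta-\boldsymbol\theta^*\mathbf 1^T$, the corresponding entry becomes $\prod_{j:r_j=1}(\theta_{j,\boldsymbol\alpha}-\theta_j^*)$. Expanding this product over subsets of the active items gives
\begin{align*}
t_{\mathbf r,\boldsymbol\alpha}(Q,\Theta-\boldsymbol\theta^*\mathbf 1^T)
=\sum_{\mathbf r'\preceq\mathbf r}\Bigl(\prod_{j:\,r_j=1,\,r'_j=0}(-\theta_j^*)\Bigr)\, t_{\mathbf r',\boldsymbol\alpha}(Q,\Theta).
\end{align*}
Here $\mathbf r'$ ranges over $\{0,1\}^J$, and we use the convention $t_{\mathbf 0,\boldsymbol\alpha}=1$ (the empty product).

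This identity suggests the definition of $D(\boldsymbol\theta^*)$ directly. Set its $(\mathbf r,\mathbf r')$ entry to
\[
\prod_{j:\,r_j=1,\,r'_j=0}(-\theta_j^*)\quad\text{if }\mathbf r'\preceq\mathbf r,
\]
and to $0$ otherwise. The displayed expansion then states exactly that $T(Q,\Theta-\boldsymbol\theta^*\mathbf 1^T)=D(\boldsymbol\theta^*)T(Q,\Theta)$ entrywise. By construction, $D$ depends only on $\boldsymbol\theta^*$ and not on $Q$, $\Theta$ or $\boldsymbol\alpha$.

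It remains to check the structural claims. We order the index set $\{0,1\}^J$ by any linear extension of $\preceq$, for instance by number of ones and then lexicographically. Then $\mathbf r'\preceq\mathbf r$ with $\mathbf r'\neq\mathbf r$ forces $\mathbf r'$ to come before $\mathbf r$, so $D$ is lower triangular. The diagonal entry at $\mathbf r'=\mathbf r$ is an empty product and hence equals $1$. A lower triangular matrix with unit diagonal has determinant $1$, so $D$ is invertible. One can also note that $D(-\boldsymbol\theta^*)$ is its inverse, since shifting by $\boldsymbol\theta^*$ and then by $-\boldsymbol\theta^*$ is the identity.

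The only point needing care is bookkeeping. We must include the row $\mathbf r=\mathbf 0$, the all-ones row, and the conventions on the empty product so that the matrix dimensions ($2^J\times 2^J$ on the left) match. We must also confirm that the binomial-type expansion holds for every $\boldsymbol\alpha$ with the same coefficients, which is what makes $D$ independent of $\Theta$. With those conventions fixed, there is no real obstacle.
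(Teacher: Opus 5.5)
Your proof is correct. The paper gives no proof of its own and imports the result from \cite{xu2017identifiability}, whose argument is the one you give: expand $\prod_{j:r_j=1}(\theta_{j,\boldsymbol\alpha}-\theta_j^*)$ over subsets to obtain $D(\boldsymbol\theta^*)$ with entries $\prod_{j:r_j=1,\,r'_j=0}(-\theta_j^*)$ for $\mathbf r'\preceq\mathbf r$ and $0$ otherwise, which is unit lower triangular whenever the rows are ordered compatibly with $\preceq$.
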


\subsection{Identifiability Problem for PM-CDMs}
Although PM-CDMs can be represented as restricted latent class models, the resulting representation differs fundamentally from the setting considered by \cite{xu2017identifiability}, and existing identifiability results cannot be applied directly.

The joint response distribution of a PM-CDM admits an equivalent RLCM representation \citep{shang2021partial,erosheva2007describing}. 
Consider the stacked latent binary vector
\[
A=(\boldsymbol{\alpha}_1^*,\dots,\boldsymbol{\alpha}_J^*),
\qquad
\boldsymbol{\alpha}_j^*\in\{0,1\}^K.
\]
The latent space of $A$ is
\[
\mathcal A
=
\prod_{j=1}^J\{0,1\}^K
=
\{0,1\}^{JK},
\]
which has cardinality $2^{JK}$.
The latent class probability associated with $A$ is
\begin{align}\label{pi_A}
\pi_A
&=
\mathbb E_{D_{\mu,\Sigma}}
\left[
\prod_{j=1}^J
\prod_{k=1}^K
d_k^{\alpha_{jk}^*}
(1-d_k)^{1-\alpha_{jk}^*}
\right]
\nonumber\\
&=
\mathbb E_{D_{\mu,\Sigma}}
\left[
\prod_{k=1}^K
d_k^{\sum_{j=1}^J\alpha_{jk}^*}
(1-d_k)^{\sum_{j=1}^J\left(1-\alpha_{jk}^*\right)}
\right].
\end{align}

Conditional on the latent class $A$, the response distribution for item $j$ depends only on the item-specific latent attribute profile $\boldsymbol{\alpha}_j^*$:
\[
P(y_j=1\mid A)={P}(y_j=1\mid \boldsymbol{\alpha}_j^*)=\theta_{j,\boldsymbol{\alpha}_j^*}.
\]
That is, the conditional response probability coincides with the corresponding response probability in the original CDM. 
The resulting RLCM representation has probability mass function
\[
{P}_{\textrm{RLCM}}(\mathbf{y}\mid \Theta, \boldsymbol{\mu}, \boldsymbol{\Sigma})=\sum_{A=(\boldsymbol{\alpha}_1^*,\dots,\boldsymbol{\alpha}_J^*)\in\mathcal{A}}\pi_A\prod^J_{j=1}\theta_{j,\boldsymbol{\alpha}_j^*}^{y_j}(1-\theta_{j,\boldsymbol{\alpha}_j^*})^{1-y_j}.   
\]
This representation can therefore be viewed as an RLCM with $2^{JK}$ latent classes and $J$ items.

The identifiability theorem, therefore, implies that if the $Q$-matrix takes the form 
\begin{align*}
Q=
\begin{pmatrix}
I_{JK} \\
I_{JK} \\
Q'
\end{pmatrix},
\end{align*}
and each column of $Q'$ contains at least one nonzero entry, then all latent class probabilities \(\{\pi_A:A\in\mathcal A\}\) and item response probabilities
\(\{\theta_{j,\boldsymbol{\alpha}}\}\) are globally identifiable. 
However, this structural condition on the $Q$-matrix cannot hold for the PM-CDM, since the induced RLCM has $JK$ latent attributes but only $J$ items. Therefore, a different approach is needed to study the identifiability of PM-CDMs.

The response distribution of $J$ binary items lies in a simplex of dimension $2^J-1$. However, the induced RLCM representation introduces $2^{JK}$ latent class probability parameters, far exceeding the dimension of the observable model. Consequently, the RLCM representation appears severely overparameterized from the viewpoint of identifiability analysis.

Nevertheless, these probabilities are not free parameters in the PM-CDM. By \eqref{pi_A}, they are fully determined by the Gaussian copula parameters \((\boldsymbol{\mu},{\Sigma})\) and therefore lie in a lower-dimensional subset of the $2^{JK}$-class probability simplex.

For \(A=(\boldsymbol{\alpha}_1^*,\dots,\boldsymbol{\alpha}_J^*)\), the integral representation is
\begin{align}\label{p_formula}
\pi_A
=
\int_{\mathbb R^K}
&
\prod_{k=1}^K
\Phi(z_k)^{\sum_{j=1}^J\alpha_{jk}^*}
\bigl(1-\Phi(z_k)\bigr)^{\sum_{j=1}^J(1-\alpha_{jk}^*)}
\nonumber\\
&\cdot
(2\pi)^{-K/2}
|\boldsymbol{\Sigma}|^{-1/2}
\exp\left(
-\frac12
(\mathbf z-\boldsymbol\mu)^T
\boldsymbol\Sigma^{-1}
(\mathbf z-\boldsymbol\mu)
\right)
\, d\mathbf z.
\end{align}
The latent class probabilities $\pi_A$ are determined by Gaussian copula integrals and therefore are not rational functions of \((\boldsymbol{\mu},\boldsymbol{\Sigma})\). Consequently, the parameter space associated with \((\Theta,\boldsymbol{\mu},\boldsymbol{\Sigma})\) is not semi-algebraic. This distinguishes PM-CDMs from the latent class models typically studied in algebraic statistics.
As a result, the algebraic tools for identifiability developed in \cite{sullivant2018algebraic} cannot be directly applied to the parameterization in terms of $(\Theta,\boldsymbol{\mu},\boldsymbol{\Sigma})$. To overcome this obstacle, the next section develops an alternative parameterization that enables the use of algebraic identifiability techniques.


\section{Main Identifiability Results}\label{sec_theorem}
\subsection{Main Theorem}
\begin{theorem}\label{main_theorem}
The parameters of a PM-CDM,
\[
\{\theta_{j,\boldsymbol{\alpha}}\}
\quad\text{and}\quad
(\boldsymbol{\mu},{\Sigma}),
\]
are locally identifiable if one of the following conditions holds:
\begin{enumerate}[(i)]
\item $K=1$ and $J\ge 4$,
\item $K\ge 2$ and the $Q$-matrix takes the form
\[
\begin{pmatrix}
I_K\\
I_K\\
I_K\\
Q'
\end{pmatrix},
\]
where $Q'$ is an arbitrary matrix of $K$ columns. That is, each latent attribute is measured by at least three pure items.
\end{enumerate}
\end{theorem}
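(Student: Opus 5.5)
We would first replace the non-algebraic copula parameterization by an algebraic one. Since each $\theta_{j,\mathbf d}$ in \eqref{theta_jd} is multilinear in $\mathbf d$, and each $d_k$ enters every item at most to degree one, the PM-CDM likelihood \eqref{pmcdm_lh} depends on $(\boldsymbol\mu,\Sigma)$ only through the mixed moments
\[
m_{\boldsymbol\beta}=\EE_{D_{\mu,\Sigma}}\Bigl[\prod_{k=1}^K d_k^{\beta_k}\Bigr],\qquad \boldsymbol\beta\in\{0,\dots,J\}^K ,
\]
with $\beta_k$ bounded by the number of items loading on attribute $k$. Two facts are built into the model and must be imposed, not identified: $m_{\boldsymbol 0}=1$, and the marginal of each $d_k$ is uniform. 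Hence every pure moment $m_{\beta\mathbf e_k}=1/(\beta+1)$ is fixed.

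The map $(\Theta,\{m_{\boldsymbol\beta}\})\mapsto(P(\mathbf y))_{\mathbf y}$ is then polynomial on a semi-algebraic set, so Proposition 16.1.7 of \cite{sullivant2018algebraic} applies. We would prove local identifiability of this moment parameterization and then transfer it to $(\boldsymbol\mu,\Sigma)$. For the transfer, note that the uniform-marginal constraint forces $\boldsymbol\mu=\mathbf 0$ and $\operatorname{diag}\Sigma=\mathbf 1$, so the copula has only the off-diagonal correlations $\rho_{kl}$ as free parameters. The second mixed moment $\EE[d_kd_l]$ is a strictly increasing real-analytic function of $\rho_{kl}$. Therefore recovering the pairwise moments $m_{\mathbf e_k+\mathbf e_l}$ recovers $\Sigma$ uniquely.

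\textbf{Case (i), $K=1$.} The distribution of $d$ is uniform and hence known, so only the item parameters $\theta_{j,0},\theta_{j,1}$ remain. Then $P(\mathbf y\succeq\mathbf r)=\EE\prod_{j:r_j=1}\bigl(\theta_{j,0}+(\theta_{j,1}-\theta_{j,0})d\bigr)$ involves only the known moments $1/(n+1)$.
\begin{itemize}
\item The $J$ first-order margins and $\binom J2$ second-order margins already give at least $2J$ polynomial equations when $J\ge4$.
\item From the second-order margins, $\operatorname{Cov}(y_i,y_j)=\delta_i\delta_j\operatorname{Var}(d)$ with $\delta_j=\theta_{j,1}-\theta_{j,0}$, and $\operatorname{Var}(d)=1/12$ is known.
\item With $J\ge3$, these products determine each $\delta_j$ up to a global sign. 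The first moments then give $\theta_{j,0}=P(y_j=1)-\delta_j/2$.
\end{itemize}
This yields finitely many (at most two) preimages, and in fact strict monotonicity pins the sign. We would still verify the Jacobian rank symbolically to match the paper's framework; the higher-order margins available at $J\ge4$ supply the redundancy needed to make that rank generic.

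\textbf{Case (ii), $K\ge2$.} We would reduce to the pairwise subproblem the abstract mentions. For each pair $(k,l)$, restrict attention to the three pure items of $k$, the three of $l$, and possibly one item of $Q'$; the abstract suggests a minimal configuration of five items. The observed joint distribution of these items is a PM-CDM with two attributes. Its Jacobian with respect to the pure-item parameters of $k$ and $l$ and the single correlation $\rho_{kl}$ (via $m_{\mathbf e_k+\mathbf e_l}$ and higher mixed moments) is computed symbolically, for example in Macaulay2. We then exhibit one parameter point where it has full column rank. Since the determinant of a maximal minor is a nonzero polynomial, and after the moment reparameterization a nonzero real-analytic function in $\rho$, full rank holds off a measure-zero set.

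The local statement then follows in three steps:
\begin{itemize}
\item The inverse function theorem, combined with the semi-algebraic fiber-finiteness argument, gives local identifiability of each pair's parameters.
\item Pure-item parameters are shared across pairs, so running over all $\binom K2$ pairs determines every $\rho_{kl}$, hence $\Sigma$.
\item The remaining $Q'$ item parameters follow by the triangular argument of Proposition~\ref{prop3_xu}. The $T$-matrix rows combining a $Q'$ item with products of pure items are linear in the unknown $\theta_{j,\boldsymbol\alpha}$ with coefficients that are now known moments. The resulting linear system is generically invertible because the three-fold pure-item design separates all $2^{|\mathbf q_j|}$ reduced profiles.
\end{itemize}

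The main obstacle is the passage from generic full Jacobian rank to finiteness of fibers, given that the copula part is not algebraic. Our first attempt would be to prove finiteness in the polynomial moment coordinates, where Proposition 16.1.7 applies directly. We would then argue that the copula-to-moment map $\rho_{kl}\mapsto m_{\mathbf e_k+\mathbf e_l}$ is injective, which keeps fibers finite. Some care is needed because higher mixed moments are not free; they are images of $\rho$. The argument must therefore use only the generic-rank statement restricted to the image manifold, which is where we expect the real-analytic, measure-zero reasoning to be needed.
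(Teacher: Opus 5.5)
There is a genuine gap, and it is in your first reduction. You read the remark that each $d_k$ is uniform as a model constraint. That forces $\boldsymbol\mu=\mathbf 0$ and $\operatorname{diag}\Sigma=\mathbf 1$, which you then treat as known (``imposed, not identified''). The theorem, however, asserts identifiability of all of $(\boldsymbol\mu,\Sigma)$, and the paper's proof treats $\mu_k$ and $\sigma_{kk}$ as free unknowns. Its Step~2 recovers $(\mu_1,\sigma_{11})$ from $P(o_1=1)=1-\Phi(-\mu_1/\sqrt{\sigma_{11}+1})$ and $P(o_1=1,o_2=1)=\Phi_2(\cdot,\cdot,\sigma_{11}/(\sigma_{11}+1))$, and only then $\sigma_{12}$. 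The uniform-marginal sentence is literally true only at $\mu_k=0$, $\sigma_{kk}=1$. Within your restricted model, the case (i) covariance argument is correct and even gives global identifiability. That is exactly why it already closes at $J=3$ from first and second moments, whereas the theorem needs $J\ge 4$ and three pure items per attribute. What you prove is a statement about a smaller model with known latent marginals, not the theorem.

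Once $\mu_k$ and $\sigma_{kk}$ are free, your argument fails at a concrete point. A pure item has $\theta_{j,\mathbf d}=\theta_{j,0}+\delta_j d_k$, which is affine in $d_k$. Replace $d_k$ by $a_k+b_kd_k$, transform the moments $m_{\boldsymbol\beta}$ accordingly, and set $\delta_j\mapsto\delta_j/b_k$, $\theta_{j,0}\mapsto\theta_{j,0}-a_k\delta_j/b_k$; every $P(\mathbf y\succeq\mathbf r)$ is unchanged.

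In case (i) this appears as $\operatorname{Cov}(y_i,y_j)=\delta_i\delta_j\operatorname{Var}(d)$ and $P(y_j=1)=\theta_{j,0}+\delta_j\EE[d]$, with $\EE[d]$ and $\operatorname{Var}(d)$ now unknown. So first and second moments fix the $\delta_j$ only up to a common scale, and the $\theta_{j,0}$ only up to shifts along $\delta_j$.

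In case (ii) it means that, in the polynomial coordinates $(\Theta,\{m_{\boldsymbol\beta}\})$ with pure moments free, the Jacobian has a kernel of dimension at least two per attribute at every point. Your planned first step (finite fibers in moment coordinates, then restriction to the image of $(\boldsymbol\mu,\Sigma)$) therefore cannot succeed. Location and scale are pinned down only because the probit-normal family is not closed under affine maps of $d_k$, i.e.\ by the non-algebraic part of the model. Full rank must be certified with respect to $(\Theta,\boldsymbol\mu,\Sigma)$ directly. For instance, evaluate the chain-rule product of $J(\phi)$ with $\partial\mathbf m/\partial(\boldsymbol\mu,\Sigma)$ at one point and propagate by real analyticity. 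Alternatively, use an explicit argument based on third- and fourth-order moments of $d_k$.

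The same invariance acts on the paper's coordinates $\mathbf p^u$, which are an invertible linear transform of the mixed moments. So in the five-item submodel the Jacobian in $(\Theta,\mathbf p^u)$ has a kernel of dimension at least $4$ (rank at most $17$), and mirroring the paper's Step~1 will not remove this obstruction.

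A smaller point concerns the $Q'$ items. Invertibility of the coefficient matrix is a property of the latent law, not of the pure-item design, so ``the design separates the profiles'' does not establish it. You need an argument such as the paper's, which identifies the matrix as the Gram matrix in $\mathbb L^2_\nu$ of the linearly independent functions $f_{\boldsymbol\alpha}(\mathbf z)=\prod_k\Phi(z_k)^{\alpha_k}(1-\Phi(z_k))^{1-\alpha_k}$.
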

The proof proceeds by first identifying a core set of item and latent-distribution parameters within a reduced submodel. These identified quantities then serve as anchors for recovering the remaining parameters in subsequent steps.

The key observation is that, although the induced RLCM representation contains $2^{JK}$ latent classes, the corresponding class probabilities exhibit substantial structural redundancy -- many latent classes have identical class probabilities. 
The first step of the proof exploits this redundancy within a collection of suitably chosen submodels. These submodels yield more algebraically tractable parameterizations and identify the item parameters for the first $3K$ items. 
In the second step, the copula parameters \((\boldsymbol{\mu},\Sigma)\) are identified. The final step establishes identifiability of the remaining item parameters associated with $Q'$.

\begin{figure}[h]
    \centering
    \includegraphics[width=0.6\textwidth]{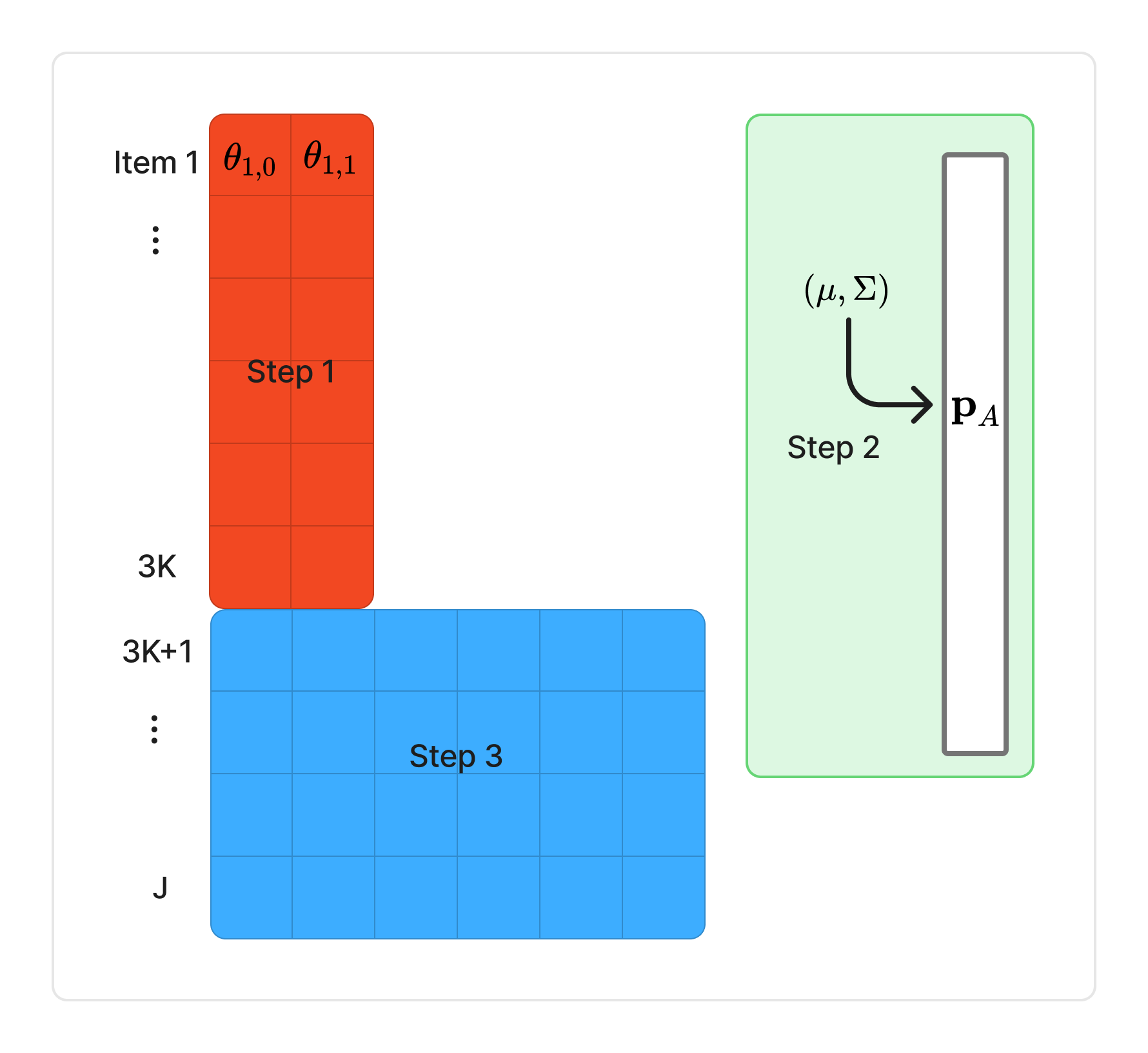}
    \caption{Overview of the proof strategy for Theorem 1. Step 1 establishes local identifiability of a reduced submodel, Step 2 identifies the copula parameters \((\boldsymbol{\mu}, \Sigma)\), and Step 3 recovers the remaining item parameters.}
    \label{fig:proof-sketch}
\end{figure}

\subsection{Step 1: A Submodel with Full-rank Jacobian}
In this step, we establish local identifiability of a specific submodel via the Jacobian criterion.

\subsubsection{Reparameterization of $\mathbf{p}$ in a Submodel}

For items depending on only one latent attribute, the latent class probabilities exhibit additional symmetry that substantially reduces the number of free parameters. Exploiting this symmetry is essential because it transforms the highly overparameterized latent-class representation into a parameterization of manageable dimension.

Consider the realized binary sequence associated with the latent representation profiles across items. For item $j$, the $j$th entry equals 1 if the required attribute is realized from the Bernoulli draw of the mastery score, and equals 0 otherwise. The items can be partitioned into at most $L=2^K-1$ blocks according to the attribute pattern they require. Within each block, latent-class probabilities depend only on the number of realized attributes and not on their specific arrangement.

Suppose the $l$th block contains $c_l$ items. Since items within the same block depend on the same latent attributes, the probability of observing $o_l$ ones and $c_l-o_l$ zeros in this block depends only on the counts and not on the specific positions of the ones and zeros. Consequently, the latent class probabilities can be reparameterized by the unique values
\[
p^u_{t_1\cdots t_L},
\qquad
t_l\in\{0,1,\dots,c_l\}.
\]

\begin{example}\label{eg_5items}
We study the case $K=2$ and $J=5$, which is the smallest nontrivial setting that captures the key combinatorial structure of the general argument. 
Consider the $Q$-matrix
\begin{equation}\label{Q1}
Q_1=
\begin{pmatrix}
1 & 0\\
1 & 0\\
1 & 0\\
0 & 1\\
0 & 1
\end{pmatrix}.
\end{equation}
The items are divided into two blocks according to the required attribute pattern. The first three items form the first block and the last two items form the second block, so that $c_1=3$ and $c_2=2$.

Within each block, latent representation sequences with the same number of ones have identical class probabilities. The equivalence classes for the two blocks are
\[
\Bigl\{
0:\{000\},\
1:\{100,010,001\},\
2:\{110,101,011\},\
3:\{111\}
\Bigr\},
\]
and
\[
\Bigl\{
0:\{00\},\
1:\{10,01\},\
2:\{11\}
\Bigr\},
\]
respectively, yielding $4\times 3=12$ distinct latent class probabilities.

The reduced latent class probabilities satisfy
\[
p_{o_1o_2o_3o_4o_5}
=
p_{o_1'o_2'o_3'o_4'o_5'}
=
p^u_{(o_1+o_2+o_3),(o_4+o_5)},
\]
whenever
\[
o_1o_2o_3 \sim o_1'o_2'o_3',
\qquad
o_4o_5 \sim o_4'o_5',
\]
that is, whenever the two sequences contain the same number of ones within each block.

We denote these distinct latent class probabilities by
$
p^u_{00}, p^u_{10}, p^u_{20}, p^u_{30},
p^u_{01}, p^u_{11}, p^u_{21}, p^u_{31},
p^u_{02}, p^u_{12}, p^u_{22}, p^u_{32}.
$
The first subscript denotes the number of ones among the first three items, and the second subscript denotes the number of ones among the last two items. 

Define
\[
s(\mathbf p^u)
:=
p^u_{00}
+3p^u_{10}
+3p^u_{20}
+p^u_{30}
+p^u_{01}
+3p^u_{11}
+3p^u_{21}
+p^u_{31}
+p^u_{02}
+3p^u_{12}
+3p^u_{22}.
\]
The remaining probability is determined by the normalization constraint,
\[
p^u_{32}=1-s(\mathbf p^u).
\]
Hence the domain of the reduced latent class probabilities is
\[
\Delta
:=
\left\{
\mathbf p^u:
\mathbf p^u\succeq\mathbf0,
\ s(\mathbf p^u)\le1
\right\}.
\]
The corresponding reduced latent class probabilities and correct response probabilities can be arranged into a $5\times 32$ matrix, whose rows are indexed by the items and columns are indexed by all possible realized binary sequences. 
The matrix is used to construct the $31$ coordinates of the rational map, defined by~\eqref{eq_map} and~\eqref{eq_marginalprob}:
\[
T_{\mathbf r,\cdot}(Q,\Theta)\mathbf p = P(\mathbf y\succeq \mathbf r),
\qquad
\mathbf r\in\{0,1\}^5\setminus\{00000\},
\]
We display the 32 realized binary attribute profiles and the reduced latent class probabilities here. The explicit $5\times 32$ matrix \(T_{I_5,\cdot}(Q,\Theta)\) is provided in Appendix~\ref{TI5}. 

\begin{align*}
\begin{blockarray}{cccccccc}
00000 & 10000 & 01000 & 11000 & 00100 & 10100 & 01100 & 11100\\
p^u_{00} & p^u_{10} & p^u_{10} & p^u_{20} & p^u_{10} & p^u_{20} & p^u_{20} & p^u_{30}
\end{blockarray}
\end{align*}
\begin{align*}
\begin{blockarray}{cccccccc}
00010 & 10010 & 01010 & 11010 & 00110 & 10110 & 01110 & 11110\\
p^u_{01} & p^u_{11} & p^u_{11} & p^u_{21} & p^u_{11} & p^u_{21} & p^u_{21} & p^u_{31}
\end{blockarray}
\end{align*}
\begin{align*}
\begin{blockarray}{cccccccc}
00001 & 10001 & 01001 & 11001 & 00101 & 10101 & 01101 & 11101\\
p^u_{01} & p^u_{11} & p^u_{11} & p^u_{21} & p^u_{11} & p^u_{21} & p^u_{21} & p^u_{31}
\end{blockarray}
\end{align*}
\begin{align*}
\begin{blockarray}{cccccccc}
00011 & 10011 & 01011 & 11011 & 00111 & 10111 & 01111 & 11111\\
p^u_{02} & p^u_{12} & p^u_{12} & p^u_{22} & p^u_{12} & p^u_{22} & p^u_{22} & p^u_{32}
\end{blockarray}
\end{align*}
%
The rational map can be computed by the $T$-matrix decomposition~\eqref{T_decomp}, and was implemented in \texttt{Macaulay2} (see the Online Supplementary Material).
Evaluating the Jacobian matrix $J(\phi)$ at a specific parameter point $(\Theta_0, \mathbf{p}_0)$ shows that its rank is 21, equal to the number of free parameters in the submodel. 
\end{example}

The computation suggests that the Jacobian matrix \(J(\phi)\) has full column rank at generic parameter points. Consequently, for generic \((\Theta,\mathbf p)\), there exist at most finitely many \((\Theta',\mathbf p')\in\CC^{21}\)\footnote{\(\CC^n\) denotes the \(n\)-dimensional complex vector space.} such that
\(
T_{\mathbf r,\cdot}(Q,\Theta)\mathbf p
=
T_{\mathbf r,\cdot}(Q,\Theta')\mathbf p'
\)
for all \(\mathbf r\in\{0,1\}^5\).
The same conclusion holds when $\CC^{21}$ is replaced by $\RR^{21}$ or by \((0,1)^{10}\times\Delta\), where $\Delta$ denotes the domain of the reduced latent class probabilities $\mathbf p^u$:
\[
\Delta:=\{\mathbf{p}^u=(p^u_{00},p^u_{10},p^u_{20},p^u_{30},p^u_{01},p^u_{11},p^u_{21},p^u_{31},p^u_{02},p^u_{12},p^u_{22}):\mathbf{p}^u\succeq \mathbf{0},\ s(\mathbf p^u) \leq 1 \}.
\]
Denote \(\Omega=(0,1)^{10}\times\Delta\),
and let \(V=\left\{
(\Theta,\mathbf p^u)\in\Omega:
\rk(J(\phi))<21
\right\}\).
Example~\ref{eg_5items} shows that $V\neq\Omega$. Since the entries of $J(\phi)$ are rational functions of \((\Theta,\mathbf p^u)\), the set $V$ has Lebesgue measure zero in $\Omega$ (see the lemma in \citet{okamoto1973distinctness}).

Let $\Delta'\subseteq\Delta$ denote the subset parameterized by the copula parameters $(\boldsymbol{\mu},\boldsymbol{\Sigma})$, and let $\Omega'=(0,1)^{10}\times\Delta'$.
Although the rank-deficient set \(V
=
\left\{
(\Theta,\mathbf p^u)\in\Omega:
\rk(J(\phi))<21
\right\}
\)
has measure zero in $\Omega$, this does not immediately imply that $V\cap\Omega'$ has measure zero in $\Omega'$, since $\Omega'$ itself is a lower-dimensional subset of $\Omega$. Therefore, additional arguments are required to establish local identifiability for the submode.

\subsubsection{Real Analytic Functions}
We use properties of real analytic functions to prove that $V\cap \Omega'$ has measure zero in $\Omega'$.

\begin{definition}{(Definition 2.2.1 in \cite{krantz2002primer}}
Let $U\subseteq\RR^m$ be an open set. A function
$f:U\to\RR$
is called real analytic on $U$, written \(f\in C^\omega(U)\), if for every $x\in U$, the function $f$ admits a convergent power series expansion in a neighborhood of $x$.
\end{definition}

We will apply the following multivariate analogue of Corollary 1.2.6 in \cite{krantz2002primer}. The result can be viewed as a real-analytic-function version of the Okamoto lemma. It also appears as Exercise 3.1.1 in \cite{curry2025tasty} and follows from multivariate power series expansions together with coefficient comparison.

\begin{corollary}
Let $U\subseteq\RR^m$ be connected and open, and let
\(f\in C^\omega(U)\).
If $f=0$ on a nonempty open subset of $U$, then $f\equiv0$ on $U$.
\end{corollary}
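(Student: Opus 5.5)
The statement is the multivariate identity theorem for real analytic functions. I will prove it by a connectedness argument. Define
\[
W=\{x\in U:\ \partial^\beta f(x)=0 \text{ for all multi-indices } \beta\in\mathbb{N}^m\},
\]
the set of points at which $f$ vanishes to infinite order. The plan is to show that $W$ is nonempty, closed in $U$, and open. Since $U$ is connected, this gives $W=U$, and in particular $f\equiv 0$ on $U$.

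\textbf{Nonempty.} Let $V\subseteq U$ be the nonempty open set on which $f=0$. Every partial derivative of $f$ also vanishes identically on the open set $V$. Hence $V\subseteq W$, and $W\neq\emptyset$.

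\textbf{Closed.} Real analytic functions are $C^\infty$, because a convergent power series may be differentiated termwise inside its region of convergence. So each $\partial^\beta f$ is continuous on $U$, and each zero set $\{\partial^\beta f=0\}$ is relatively closed in $U$. The set $W$ is the intersection of these countably many closed sets, so it is closed in $U$.

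\textbf{Open.} Fix $x_0\in W$. By the definition of $C^\omega(U)$, there is a neighborhood $B$ of $x_0$, which I take to be a small polydisc or ball inside $U$, on which
\[
f(x)=\sum_{\beta} c_\beta (x-x_0)^\beta,
\]
with the series converging on $B$. The key step is the coefficient comparison $c_\beta=\partial^\beta f(x_0)/\beta!$. I will justify it by differentiating the absolutely convergent multivariate series termwise and evaluating at $x_0$. Since $x_0\in W$, every coefficient $c_\beta$ is zero, so $f\equiv 0$ on $B$. Then all derivatives of $f$ vanish on the open set $B$, which gives $B\subseteq W$, so $W$ is open.

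\textbf{Main obstacle.} The only technical point is the termwise differentiation of a multivariate power series. This requires uniform convergence of the differentiated series on a slightly smaller polydisc. I will handle it by an Abel/Cauchy-type estimate: if the series converges at a point $y$ with $|y_i-x_{0,i}|=r_i$ for each $i$, then $|c_\beta| r^\beta$ is bounded. On a polydisc of radii $\rho_i<r_i$, the differentiated series is then dominated by a convergent geometric-type series. If needed, I will cite this standard fact from \cite{krantz2002primer}.

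Connectedness of $U$ is essential. Without it, $f$ could vanish on one component of $U$ and not on another.
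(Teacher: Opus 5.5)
Your argument is correct and follows the route the paper indicates. The paper gives no detailed proof: it cites Krantz--Parks and Curry et al.\ and notes that the result follows from multivariate power series expansions together with coefficient comparison, which is exactly your open step $c_\beta=\partial^\beta f(x_0)/\beta!$, and your clopen argument on the infinite-order vanishing set $W$ simply makes explicit the connectedness step that the paper leaves to those references.
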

For a real analytic function $f$, if its zero set $\{x:f(x)=0\}$ contains a nonempty open set, then $f\equiv 0$. As a consequence, if $f$ is nonzero at some point, then its zero set has measure zero. 
In Example \ref{eg_5items}, the point \((\Theta_0, \mathbf{p}^u_0)\) such that \(\rk(J)=21\) is obtained from 
\[
\Theta_0=\left(\frac{1}{5}, \frac{4}{5}, \frac{1}{4}, \frac{4}{5}, \frac{3}{20}, \frac{3}{4}, \frac{3}{10}, \frac{9}{10}, \frac{1}{5}, \frac{3}{5}\right),\qquad \boldsymbol{\mu}_0=
\begin{pmatrix}
0\\
0
\end{pmatrix},
\qquad {\Sigma}_0=
\begin{pmatrix}
 1 & 0\\
 0 & 1
\end{pmatrix}.
\]
Hence, there exists a $21\times21$ minor of $J(\phi)$ that is nonzero at this point. If this minor is real analytic as a function of $(\Theta, \boldsymbol{\mu}, \Sigma)$, then it is nonzero almost everywhere on the subspace $\Omega'$.
Consequently, the Jacobian matrix has full column rank almost everywhere on this subspace, implying local identifiability of $(\Theta,\mathbf p^u)$.

The sum, product, quotient, and composition of real analytic functions are again real analytic (Propositions 2.2.2 and 2.2.8 in \cite{krantz2002primer}). We already know that the selected $21\times21$ minor is real analytic on $(0,1)^{10}\times\Delta$. Therefore, to show that this minor is analytic as a function of \((\Theta,\boldsymbol{\mu},\boldsymbol{\Sigma})\), it suffices to prove that the map
\((\boldsymbol{\mu},\boldsymbol{\Sigma})
\mapsto
\mathbf p^u\)
defined by \eqref{p_formula} is real analytic on $\RR^2\times\mathrm{PD}_2$ \footnote{\(\mathrm{PD}_k\) denotes the cone of \(k\times k\) positive definite matrices.}.

In the next subsection, we will show that the marginal probabilities \(P(\mathbf y\succeq\mathbf r)\) can be represented as integrals involving multivariate normal densities. Since multivariate normal densities are real analytic, the following proposition implies that these marginal probabilities are also real analytic functions of the distribution parameters.

In the next subsection, we will show that the marginal probabilities $P(\mathbf y\succeq\mathbf r)$ can be expressed as multivariate normal CDFs evaluated at real analytic transformations of $(\boldsymbol{\mu},\Sigma)$. Since multivariate normal densities are real analytic, the following proposition implies that these marginal probabilities are also real analytic functions of $(\boldsymbol{\mu},\Sigma)$.

\begin{proposition}[Proposition 2.2.3 in \cite{krantz2002primer}]
Let $f$ be a real analytic function defined on an open subset $U\subseteq\RR^m$. Then $f$ is continuous and all of its partial derivatives exist and are real analytic. Furthermore, the indefinite integral of $f$ with respect to any variable is real analytic.
\end{proposition}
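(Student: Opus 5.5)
Since this proposition is quoted from Krantz--Parks, the proof can be short: I will recall the local power-series facts that give each claim, without reproving general theory.

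\emph{Continuity and derivatives.} Fix $x_0\in U$ and take an expansion $f(x)=\sum_\beta c_\beta (x-x_0)^\beta$ that converges on a polydisc $P(x_0,\rho)\subseteq U$. By Abel's lemma, if the series converges at a point with all $|x_i-x_{0,i}|=\rho_i$, then it converges absolutely and uniformly on every smaller closed polydisc $P(x_0,\rho')$ with $\rho'<\rho$. Each partial sum is a polynomial, so uniform convergence gives continuity at $x_0$. For a derivative, the differentiated series $\sum_\beta \beta_i c_\beta (x-x_0)^{\beta-e_i}$ has coefficients of size at most $|c_\beta|\beta_i$. These grow only polynomially, so by comparison with a geometric series it still converges uniformly on the smaller polydisc. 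The Weierstrass theorem on termwise differentiation of uniformly convergent series then shows that $\partial_i f$ exists and equals this series near $x_0$. The derivative therefore has a convergent power series at every point and is real analytic. Iterating gives all higher partial derivatives.

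\emph{Indefinite integrals.} Fix a variable $x_i$ and a base point $a$ such that the segment from $a$ to $x_i$ stays in $U$ (e.g., $U$ convex in that direction). Set $F(x)=\int_a^{x_i} f(x_1,\dots,t,\dots,x_m)\,dt$. Near $x_0$, integrate the uniformly convergent expansion term by term. Then $F(x)=F(x_0\text{ with }x_i\text{ replaced})+\sum_\beta c_\beta\frac{(x_i-x_{0,i})^{\beta_i+1}}{\beta_i+1}\prod_{l\ne i}(x_l-x_{0,l})^{\beta_l}$. The new coefficients are smaller than the old ones, so this series converges on the same polydisc.

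\emph{Main obstacle.} The remaining difficulty is the first term, $G(\hat x)=\int_a^{x_{0,i}} f\,dt$, viewed as a function of the other variables $\hat x$. It is an integral over a fixed compact segment, so no single expansion point covers it. I would handle it by compactness. Cover the segment by finitely many polydiscs on which expansions of $f$ converge uniformly, with a common radius in the $\hat x$-directions. On each piece, integrating termwise in $t$ yields a power series in $\hat x-\hat x_0$ with uniformly bounded coefficients, for example through Cauchy estimates on the complexification. Summing the finitely many pieces gives a convergent expansion of $G$ near $\hat x_0$. Hence $F$ is real analytic.

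In the paper's application, the integrand is a Gaussian density times bounded polynomials in $\Phi$, over the unbounded domain $\mathbb R^K$. Applying the same argument there additionally requires uniform Gaussian tail domination on complex neighbourhoods of $(\boldsymbol\mu,\Sigma)$. This domination is what I expect the paper to verify separately.
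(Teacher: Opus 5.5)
The paper does not prove this statement. It quotes it from Krantz--Parks and uses it as a black box, so there is no paper route to compare with. Your sketch is a correct rendering of the standard argument: Abel's lemma, then termwise differentiation and integration of a locally absolutely and uniformly convergent expansion. There are a few small slips to fix:
\begin{itemize}
\item Apply Abel's lemma at a point with $|x_i-x_{0,i}|=\rho_i''<\rho_i$. Convergence on the distinguished boundary of $P(x_0,\rho)$ is not given.
\item The first term in your formula for $F$ should be $F$ evaluated at $x$ with its $i$th coordinate replaced by $x_{0,i}$ (your $G(\hat x)$), not $F$ evaluated at $x_0$.
\item For $G$ you need coefficient bounds $|b_\beta|\le C r^{-|\beta|}$, not merely bounded coefficients. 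These follow directly, by Tonelli, from absolute convergence of each local expansion on a closed polydisc; no complexification is needed.
\end{itemize}
Singling out $G$ is apt. In several variables an indefinite integral in $x_i$ is determined only up to an arbitrary function of $\hat x$, so the claim needs a specified base point. With the base point at the expansion centre the issue disappears. With a fixed base point $a$, your compactness argument is exactly what is needed.

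Your closing remark is also correct, but it concerns the paper's application rather than this statement, and the paper does not supply the domination you anticipate. It simply asserts that, since normal densities are real analytic, this proposition makes the normal-CDF expressions, and hence $\mathbf p^u$ from \eqref{p_formula}, real analytic in $(\boldsymbol{\mu},\Sigma)$. Those are integrals over unbounded ranges with the parameters inside the integrand. That is the analogue of your $G$ over a non-compact interval, where the finite-cover argument fails and the proposition as stated does not apply. The conclusion is nonetheless true. One route is your uniform Gaussian domination on a complex neighbourhood of $(\boldsymbol{\mu},\Sigma)$. Another is to write the density in natural parameters $(\Sigma^{-1}\boldsymbol{\mu},\Sigma^{-1})$ and use analyticity of exponential-family integrals of bounded functions on the open natural parameter space. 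Either way, this step is missing from the paper.
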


Therefore, the Jacobian matrix has full column rank almost everywhere on $\Omega'$, establishing local identifiability of the submodel in Example~\ref{eg_5items}. The case $K=1$ and $J=4$, corresponding to case (i) of Theorem~\ref{main_theorem}, is verified similarly.

\subsection{Step 2: From \(\mathbf{p}\) to \((\boldsymbol{\mu},\Sigma)\)}\label{mu_and_sigma}
When each latent attribute is measured by at least three items depending only on that attribute, for every pair of latent attributes \((k,k')\), we can select five items whose $Q$-matrix has the same structure as in Example~\ref{eg_5items}. Each such submodel identifies the corresponding Gaussian parameters \((\boldsymbol{\mu}_{k,k'},\Sigma_{kk',kk'})\). By combining these $K(K-1)/2$ overlapping local structures, the full Gaussian parameters \((\boldsymbol{\mu},\Sigma)\) can be recovered.

Next, we show how to identify \((\boldsymbol{\mu}_{1,2},\Sigma_{12,12})\) from the corresponding reduced latent class probabilities $\mathbf p^u\in\Delta$. 
To this end, we reformulate the integral representation in \eqref{p_formula} into an equivalent system, which falls under a special case of Proposition~2 in \cite{fang2021identifiability}. In this subsection, we focus on the local Gaussian structure \((\boldsymbol{\mu}_{1,2}, {\Sigma}_{12,12})\) and the corresponding latent subvector $\mathbf d_{1,2}$. For notational simplicity, we suppress the subscripts and write \((\boldsymbol{\mu},{\Sigma})\) and $\mathbf d$ instead.

Recall that $o_j$ denotes the realized binary indicator for item $j$ in the block.
For the latent mastery score \(\mathbf d\sim N(\boldsymbol{\mu},\boldsymbol{\Sigma})\),
the marginal probability of $o_1=1$ is
\begin{align*}
P(o_1=1)
&=
\EE_{\mathbf d}[P(o_1=1\mid\mathbf d)]\\
&=
\EE_{\mathbf d'}\!\left[
P(\epsilon_1\le d_1'+\mu_1\mid \mathbf{d}')
\right]
\qquad
(\mathbf d'=\mathbf d-\boldsymbol{\mu})\\
&=
P\!\left(
\mu_1+\sqrt{\sigma_{11}+1}\,\xi\ge0
\right)\\
&=
1-\Phi\!\left(
-\frac{\mu_1}{\sqrt{\sigma_{11}+1}}
\right),
\end{align*}
where \(\xi\sim N(0,1)\).

The marginal probability of $o_1=1,o_2=1$ is
\begin{align*}
P(o_1=1, o_2=1) 
&= \EE_{\mathbf{d}}[P(o_1=1, o_2=1\mid \mathbf{d})]\\
&=\EE\left[P(\epsilon_1\leq d_1'+\mu_1,\ \epsilon_2\leq d_1'+\mu_1\mid \mathbf d')\right] \qquad
(\mathbf d'=\mathbf d-\boldsymbol{\mu})\\
&= P(\mu_1+\sqrt{\sigma_{11}+1}\,\xi_1\geq 0, \mu_1+\sqrt{\sigma_{11}+1}\,\xi_2\geq 0)\\
&=\Phi_2\left(-\frac{\mu_1}{\sqrt{\sigma_{11}+1}}, -\frac{\mu_1}{\sqrt{\sigma_{11}+1}}, \frac{\sigma_{11}}{{\sigma_{11}+1}}\right),
\end{align*}
where \(\epsilon_1,\epsilon_2\sim N(0,1)\) and independent, 
\[
\xi_1=\frac{d_1'+\epsilon_1}{\sqrt{\sigma_{11}+1}},
\qquad
\xi_2=\frac{d_1'+\epsilon_2}{\sqrt{\sigma_{11}+1}}.
\]
Then \((\xi_1,\xi_2)\) is bivariate standard normal with correlation
\[
\rho=\frac{\sigma_{11}}{\sigma_{11}+1}.
\]
Here \(\Phi_2(a,b,\rho)\) denotes the upper-tail probability
$
P(X_1\ge a,\ X_2\ge b),
$
where $(X_1,X_2)$ follows the bivariate normal distribution
\[
N\left(
\begin{pmatrix}
0\\
0
\end{pmatrix},
\begin{pmatrix}
1&\rho\\
\rho&1
\end{pmatrix}
\right).
\]

Similarly, we obtain
\[
P(o_4=1)=1-\Phi\left(-\frac{\mu_2}{\sqrt{\sigma_{22}+1}}\right),
\]
\[
P(o_4=1, o_5=1)=\Phi_2\left(-\frac{\mu_1}{\sqrt{\sigma_{22}+1}}, -\frac{\mu_2}{\sqrt{\sigma_{22}+1}}, \frac{\sigma_{22}}{{\sigma_{22}+1}}\right),
\]
and
\[
P(o_1=1, o_4=1)=\Phi_2\left(-\frac{\mu_1}{\sqrt{\sigma_{11}+1}}, -\frac{\mu_2}{\sqrt{\sigma_{22}+1}}, \frac{\sigma_{12}}{\sqrt{\sigma_{11}+1}\sqrt{\sigma_{22}+1}}\right).
\]
The marginal probabilities can be expressed in terms of the reduced latent class probabilities $\mathbf p^u$:
\begin{align*}
P(o_1=1) &= p^u_{10} + 2p^u_{20} + p^u_{30} + 2\cdot(p^u_{11} + 2p^u_{21} + p^u_{31}) + p^u_{12} + 2p^u_{22} + p^u_{32},\\
P(o_4=1) &= p^u_{01} + 3\cdot (p^u_{11} + p^u_{21}) + p^u_{31} + p^u_{02} + 3\cdot(p^u_{12} + p^u_{22}) + p^u_{32},\\
P(o_1=1,o_2=1) &= p^u_{20} + p^u_{30} + 2\cdot(p^u_{21} + p^u_{31}) + p^u_{22} + p^u_{32},\\ 
P(o_1=1,o_4=1) &= p^u_{11} + 2p^u_{21} + p^u_{31} + p^u_{12} + 2p^u_{22} + p^u_{32},\\
P(o_4=1,o_5=1) &= p^u_{02} + 3p^u_{12} + 3p^u_{22} + p^u_{32}. 
\end{align*}
For a fixed $\mathbf{p}^u$, the parameters $\mu_1$ and $\sigma_{11}$ are identifiable. Indeed, suppose there exist two distinct pairs \((\mu_1,\sigma_{11})\) and \((\overline{\mu}_1,\overline{\sigma}_{11})\) such that
\begin{align*}
1-\Phi\left(-\frac{\mu_1}{\sqrt{\sigma_{11}+1}}\right)&=1-\Phi\left(-\frac{\overline{\mu}_1}{\sqrt{\overline{\sigma}_{11}+1}}\right),\\
\Phi_2\left(-\frac{\mu_1}{\sqrt{\sigma_{11}+1}}, -\frac{\mu_1}{\sqrt{\sigma_{11}+1}}, \frac{\sigma_{11}}{{\sigma_{11}+1}}\right)& = \Phi_2\left(-\frac{\overline{\mu}_1}{\sqrt{\overline{\sigma}_{11}+1}}, -\frac{\overline{\mu}_1}{\sqrt{\overline{\sigma}_{11}+1}}, \frac{\overline{\sigma}_{11}}{{\overline{\sigma}_{11}+1}}\right),
\end{align*}
which implies
\begin{align*}
\frac{\mu_1}{\sqrt{\sigma_{11}+1}} &=    \frac{\overline{\mu}_1}{\sqrt{\overline{\sigma}_{11}+1}},\\
\frac{\sigma_{11}}{{\sigma_{11}+1}}&=\frac{\overline{\sigma}_{11}}{{\overline{\sigma}_{11}+1}}.
\end{align*}
The second equation implies that $\sigma_{11}= \overline{\sigma}_{11}$. Substituting this into the first equation yields $\mu_{1}=\overline{\mu}_1$. 

The same arguments applies to $\mu_2,\sigma_{22}$ using the formulas of $P(o_4=1)$ and $P(o_4=1, o_5=1)$. Finally, $\sigma_{12}$ can be determined by the formula of $P(o_1=1, o_4=1)$. Therefore, the full Gaussian parameters \((\boldsymbol{\mu},\Sigma)\) are identifiable. 

\subsection{Step 3: Identification of the Remaining Item Parameters}

The final step is to identify the item parameters of the remaining $J-3K$ items. For each such item $j$, we select $K$ previously identified items \((j_1,\dots,j_K)\). The only unknown quantities are the item parameters associated with item $j$. The joint response probabilities of \((j_1,\dots,j_K,j)\) yield a system of $2^K$ equations, whose coefficient matrix is determined by the already identified item parameters and the identified copula parameters \((\boldsymbol{\mu},\Sigma)\).

\subsubsection{PM-DINA Model}
In a PM-DINA model, one previously identified item together with two response equations is sufficient to identify $\theta_{j0}$ and $\theta_{j1}$. For any $j>3K$, we consider the joint distribution of item 1 and $j$, together with the corresponding binary attribute realizations of length $2$. The associated reduced latent class probabilities and item parameters are summarized in the following matrix.
\begin{align}
\begin{blockarray}{cccc}
00 & 10 & 01 & 11 \\
p_{00} & p_{10} & p_{01} & p_{11}\\
\begin{block}{(cccc)}
\theta_{10} & \theta_{11} & \theta_{10} & \theta_{11}\\
\theta_{j0} & \theta_{j0} & \theta_{j1} & \theta_{j1}\\
\end{block}
\end{blockarray}.
\end{align}
There are two equations for the two parameters $\theta_{j0},\theta_{j1}$:
\begin{align}
(p_{00}+p_{10})\cdot\theta_{j0} + (p_{01}+p_{11})\cdot\theta_{j1} &= P(Y_j=1),\\
(p_{00}\theta_{10}+p_{10}\theta_{11})\cdot \theta_{j0} + (p_{01}\theta_{10}+p_{11}\theta_{11})\cdot \theta_{j1} &= P(Y_1=1, Y_j=1).
\end{align}
When 
\[
(p_{00}+p_{10})\cdot (p_{01}\theta_{10}+p_{11}\theta_{11}) - (p_{01}+p_{11})\cdot (p_{00}\theta_{10}+p_{10}\theta_{11})\neq 0,
\]
the system admits a unique solution for $\theta_{j0}$ and $\theta_{j1}$. The condition can be reduced to
\[
(p_{00}p_{11}-p_{01}p_{10})\cdot(\theta_{11}-\theta_{10})\neq 0.
\]
We know $\theta_{11}-\theta_{10}>0$ from model assumption. When item $j$ and item $1$ share a required latent attribute, by the dependence structure $y_1\leftarrow \alpha_k\rightarrow y_j$, 
the corresponding realized binary indicators $o_1$ and $o_j$ are generally dependent. Therefore,
\[
p_{00}p_{11}-p_{01}p_{10}\neq 0.
\]

\subsubsection{PM-GDINA Model}
In a PM-GDINA model, we need $K$ previously identified items together with $2^K$ equations to identify the $2^K$ parameters
\[
\{\theta_{j,\boldsymbol{\beta}}:\boldsymbol{\beta}\in\{0,1\}^K\}.
\]
We assume that the first $K$ rows of $Q$ form the identity matrix $I_K$, whose item parameters are already identified from previous analysis.  The reduced latent class probabilities for the joint structure of items $1,\dots,K$ and item $j$ contain $2^K\times 2^K=2^{2K}$ distinct values, denoted by
$p_{\boldsymbol{\alpha},\boldsymbol{\beta}}$, $\boldsymbol{\alpha}, \boldsymbol{\beta}\in\{0,1\}^K$. 
Here $\boldsymbol{\alpha}$ indexes the realized attribute pattern for items $1,\dots,K$, while $\boldsymbol{\beta}$ indexes the realized attribute pattern associated with item $j$. The corresponding matrix of marginal probabilities
\((t_{\mathbf{e}_k,\boldsymbol{\alpha}\boldsymbol{\beta}}(Q,\Theta, \mathbf{p}))\) has dimension $(K+1)\times 2^{2K}$. 

For a specific index $\boldsymbol{\alpha}$, we define the vector $\boldsymbol{\theta}^*=(\theta_{1,(1-\alpha_1)}, \cdots, \theta_{K,(1-\alpha_K)}, 0)^T$ and consider the matrix $T_{I_{K+1},\cdot}(Q,\Theta-\theta^*\mathbf{1}^T)$. For any $\boldsymbol{\alpha}'\neq\boldsymbol{\alpha}$ and any $\boldsymbol{\beta}$, there exists some $k$ such that $\alpha_k'=1-\alpha_k$. Hence,
\[
t_{\mathbf{e}_k,\boldsymbol{\alpha}'\boldsymbol{\beta}}(Q, \Theta)=\theta_{k,\alpha_k'}=\theta_{k,(1-\alpha_k)},
\]
and
\begin{align*}
t_{\mathbf{e}_k,\boldsymbol{\alpha}'\boldsymbol{\beta}}(Q,\Theta-\boldsymbol{\theta}^*\mathbf{1}^T)&=0,\\
t_{\mathbf{e}_k,\boldsymbol{\alpha}\boldsymbol{\beta}}(Q,\Theta-\boldsymbol{\theta}^*\mathbf{1}^T)&=(-1)^{\alpha_k}(\theta_{k0}-\theta_{k1}).
\end{align*}
The column indexed by $p^u_{\boldsymbol{\alpha}'\boldsymbol{\beta}}$ has at least one zero entry among the first $K$ rows, whereas every entry in the column indexed by $p^u_{\boldsymbol{\alpha}\boldsymbol{\beta}}$ is nonzero. Moreover, the first $K$ entries in the column indexed by $p^u_{\boldsymbol{\alpha}\boldsymbol{\beta}}$ depend only on $\boldsymbol{\alpha}$. By the definition of the $T$-matrix, we have
\begin{align*}
T_{\sum_{k=1}^K\mathbf{e}_k+\mathbf{e}_j, \cdot}(Q,\Theta-\boldsymbol{\theta}^*\mathbf{1}^T)=\bigodot_{k\in\{1,\cdots,K,j\}}T_{\mathbf{e}_k, \cdot}(Q,\Theta-\boldsymbol{\theta}^*\mathbf{1}^T),
\end{align*}
in which
\begin{align*}
t_{\sum_{k=1}^K\mathbf{e}_k+\mathbf{e}_j,\boldsymbol{\alpha}'\boldsymbol{\beta}}(Q,\Theta-\boldsymbol{\theta}^*\mathbf{1}^T)&=0,\\
t_{\sum_{k=1}^K\mathbf{e}_k+\mathbf{e}_j,\boldsymbol{\alpha}\boldsymbol{\beta}}(Q,\Theta-\boldsymbol{\theta}^*\mathbf{1}^T)&=(-1)^{\sum_{k=1}^K\alpha_k}\prod_{k=1}^K (\theta_{k0}-\theta_{k1}).
\end{align*}
This implies that
\begin{align}\label{T}
T_{\sum_{k=1}^K\mathbf{e}_k+\mathbf{e}_j,\cdot}(Q,\Theta-\boldsymbol{\theta}^*\mathbf{1}^T)\mathbf{p}=\left((-1)^{\sum_{k=1}^K\alpha_k}\prod_{k=1}^K (\theta_{k0}-\theta_{k1})\right)\cdot \sum_{\boldsymbol{\beta}\in\{0,1\}^K}p_{\boldsymbol{\alpha\boldsymbol{\beta}}}^u \theta_{j,\boldsymbol{\beta}}.
\end{align}
By Proposition \ref{prop3_xu}, we have 
\begin{align}\label{T_minus}
T_{\sum_{k=1}^K\mathbf{e}_k+\mathbf{e}_j,\cdot}(Q,\Theta-\boldsymbol{\theta}^*\mathbf{1}^T)\mathbf{p} &= \Big(D(\boldsymbol{\theta}^*)T(Q,\Theta)\mathbf{p}\Big)_{\sum_{k=1}^K\mathbf{e}_k+\mathbf{e}_j}\nonumber \\
&=\Big(D(\boldsymbol{\theta}^*)\cdot \big(P(\mathbf{R}\succeq \mathbf{r}\mid Q,\Theta, \mathbf{p})\big)_{\mathbf{r}\in\{0,1\}^{K+1}}^T\Big)_{\sum_{k=1}^K\mathbf{e}_k+\mathbf{e}_j}.
\end{align}
The right-hand side of \eqref{T_minus} depends only on the item parameters of item $1,\cdots, K$ and the corresponding marginal probability \(P(\mathbf{R}_{\{1,\cdots,K,j\}}\succeq \mathbf{r},\mathbf{r}\in\{0,1\}^{K+1})\). Combining \eqref{T} and \eqref{T_minus}, we obtain a linear equation in \(\{\theta_{j,\boldsymbol{\beta}}:\boldsymbol{\beta}\in\{0,1\}^K\}\):
\begin{align}\label{eq_theta_j_alpha}
\sum_{\boldsymbol{\beta}\in\{0,1\}^K}p_{\boldsymbol{\alpha\boldsymbol{\beta}}} \theta_{j,\boldsymbol{\beta}} = C_{\boldsymbol{\alpha}},
\end{align}
where $C_{\balpha}$ is a constant depending only on $\boldsymbol{\alpha}$, $\Theta_{1:K,\cdot}$, and the marginal probabilities.
Varying $\boldsymbol{\alpha}$ over $\{0,1\}^K$ yields $2^K$ linear equations of the form \eqref{eq_theta_j_alpha}, resulting a linear system
\begin{align}\label{eq_theta_j_all}
\mathbf{P} \cdot (\theta_{j,\boldsymbol{\beta}})_{\boldsymbol{\beta}\in\{0,1\}^K}^T = (C_{\boldsymbol{\alpha}})_{\boldsymbol{\alpha}\in\{0,1\}^K}^T.
\end{align}
To establish identifiability of $(\theta_{j,\boldsymbol{\beta}})_{\boldsymbol{\beta}\in\{0,1\}^K}$, it suffices to show that
$\det(\mathbf{P}^u)\neq 0$.

The $(\boldsymbol{\alpha},\boldsymbol{\beta})$ entry of $\mathbf{P}$ is 
\begin{align}
p_{\boldsymbol{\alpha}\boldsymbol{\beta}}=\int_{\RR^K}\prod_{k=1}^K\Phi(z_k)^{\alpha_k+\beta_k}(1-\Phi(z_k))^{2-\alpha_k-\beta_k}dN(\mathbf{z}\mid \boldsymbol{\mu}, \boldsymbol{\Sigma}).
\end{align}
Let $\nu=N(\mathbf{z}\mid \boldsymbol{\mu}, \boldsymbol{\Sigma})$ be a probability measure on $\RR^K$. The function $f_{\boldsymbol{\alpha}}(\mathbf{z})=\prod_{k=1}^K\Phi(z_k)^{\alpha_k}(1-\Phi(z_k))^{1-\alpha_k}$ is uniformly bounded on $\RR^K$ and thus $f_{\boldsymbol{\alpha}}\in \mathbb{L}^1_\nu(\RR^K) \bigcap \mathbb{L}^2_\nu(\RR^K)$ \footnote{\(\mathbb L^p_\nu(\RR^K)\) denotes the space of measurable functions \(f\) satisfying \(\int_{\RR^K} |f|^p\,d\nu<\infty\).}. We can define the inner product on $\mathbb{L}^2_\nu(\RR^K)$ induced by $\nu$: 
\begin{align*}
\langle f,g \rangle_\nu=\int_{\RR^K}f\cdot g \ d\nu.    
\end{align*}
The coefficient matrix $\mathbf{P}^u$ is the Gram matrix of the family $\{f_{\boldsymbol{\alpha}}:\boldsymbol{\alpha}\in\{0,1\}^K\}$. $\mathbf{P}^u$ is positive definite if and only if this family of functions is linearly independent. That is, there do not exist constants $c_{\boldsymbol{\alpha}}, \boldsymbol{\alpha}\in\{0,1\}^K$, not all zero, such that 
\begin{align}\label{lindep}
\sum_{\boldsymbol{\alpha}\in\{0,1\}^K}c_{\boldsymbol{\alpha}}f_{\boldsymbol{\alpha}} = 0.
\end{align}
We prove this by contradiction. Suppose that \eqref{lindep} holds for some coefficients $c_{\boldsymbol{\alpha}}, \boldsymbol{\alpha}\in\{0,1\}^K$, not all zero. We can then choose $2^K$ points $\mathbf{x}_{\boldsymbol{\alpha}'}$ such that $\det(f_{\boldsymbol{\alpha}}(\mathbf{x}_{\boldsymbol{\alpha}'}))\neq 0$. The resulting linear system
\begin{align*}
\sum_{\boldsymbol{\alpha}\in\{0,1\}^K}f_{\boldsymbol{\alpha}}(\mathbf{x}_{\boldsymbol{\alpha}'})\cdot c_{\boldsymbol{\alpha}} = 0, \quad \boldsymbol{\alpha}'\in\{0,1\}^K
\end{align*}
has only the trivial solution.

Choose $\varepsilon>0$ such that $(1-\varepsilon)^{4^K}-(2^K)!\cdot \varepsilon > 0$, and define $C_\varepsilon=\Phi^{-1}(1-\varepsilon)=-\Phi^{-1}(\varepsilon)$. The $2^K$ points are $\mathbf{x}_{\boldsymbol{\alpha}'}=C_\varepsilon\cdot (2\boldsymbol{\alpha}'-1)$. Then we have
\begin{align*}
f_{\boldsymbol{\alpha}}(\mathbf{x}_{\boldsymbol{\alpha}})=\prod_{k=1}^K\Phi(C_\varepsilon)^{\alpha_k}(1-\Phi(-C_\varepsilon))^{1-\alpha_k}=(1-\varepsilon)^{2^K}.
\end{align*}
For $\boldsymbol{\alpha}'\neq \boldsymbol{\alpha}$, there exists some $k$ such that $\alpha_k\neq \alpha'_k$, and
\begin{align*}
f_{\boldsymbol{\alpha}}(\mathbf{x}_{\boldsymbol{\alpha}'})
&\leq \Phi(C_\varepsilon(2\alpha_k'-1))^{\alpha_k}(1-\Phi(-C_\varepsilon(2\alpha_k'-1)))^{1-\alpha_k}\\
&=\Phi(-C_\varepsilon)^{\alpha_k}(1-\Phi(C_\varepsilon))^{1-\alpha_k}\\
&=\varepsilon.
\end{align*}
The $2^K\times 2^K$ matrix $f_{\boldsymbol{\alpha}}(\mathbf{x}_{\boldsymbol{\alpha}'})$ is a nearly diagonal matrix after row and column permutations, with diagonal entries equal to $(1-\varepsilon)^{2^K}$ and off-diagonal entries lying in $(0,\varepsilon)$. It follows that
\begin{align*}
\big|\det(f_{\boldsymbol{\alpha}}(\mathbf{x}_{\boldsymbol{\alpha}'}))\big|\geq \big((1-\varepsilon)^{2^K}\big)^{2^K} - (2^K)!\cdot \varepsilon=(1-\varepsilon)^{4^K}-(2^K)!\cdot \varepsilon > 0.
\end{align*}
Hence,
$c_{\boldsymbol{\alpha}}=0$ for all $\boldsymbol{\alpha}\in\{0,1\}^K$. This implies that the family $\{f_{\boldsymbol{\alpha}}:\boldsymbol{\alpha}\in\{0,1\}^K\}$ is linearly independent in $\mathbb{L}^2_\nu(\RR^K)$, and hence its Gram matrix $\mathbf{P}^u$ is positive definite. Therefore, the linear equation system \eqref{eq_theta_j_all} admits a unique solution for $\theta_{j,\boldsymbol{\beta}},\boldsymbol{\beta}\in\{0,1\}^K$.
\qed

To provide intuition for the construction of linear equations by Proposition~\ref{prop3_xu}, we consider a simple example with $K=2$. The following matrix is $T(Q,\Theta)$.
\begin{align*}
\begin{blockarray}{cccccccc}
p_{00,00} & p_{10,00} & p_{01,00} & p_{11,00} & p_{00,01} & p_{10,01} & p_{01,01} & p_{11,01}\\
\begin{block}{(cccccccc)}
\theta_{10} & \theta_{11} & \theta_{10} & \theta_{11} & \theta_{10} & \theta_{11} & \theta_{10} & \theta_{11}\\
\theta_{20} & \theta_{20} & \theta_{21} & \theta_{21} & 
\theta_{20} & \theta_{20} & \theta_{21} & \theta_{21}\\
\theta_{j,00} & \theta_{j,00} & \theta_{j,00} & \theta_{j,00} & 
\theta_{j,01} & \theta_{j,01} & \theta_{j,01} & \theta_{j,01}\\
\end{block}
\end{blockarray}
\end{align*}
\begin{align*}
\begin{blockarray}{cccccccc}
p_{00,10} & p_{10,10} & p_{01,10} & p_{11,10} & p_{00,11} & p_{10,11} & p_{01,11} & p_{11,11}\\
\begin{block}{(cccccccc)}
\theta_{10} & \theta_{11} & \theta_{10} & \theta_{11} & \theta_{10} & \theta_{11} & \theta_{10} & \theta_{11}\\
\theta_{20} & \theta_{20} & \theta_{21} & \theta_{21} & 
\theta_{20} & \theta_{20} & \theta_{21} & \theta_{21}\\
\theta_{j,10} & \theta_{j,10} & \theta_{j,10} & \theta_{j,10} & 
\theta_{j,11} & \theta_{j,11} & \theta_{j,11} & \theta_{j,11}\\
\end{block}
\end{blockarray}.
\end{align*}
Let $\boldsymbol{\alpha}=(01)$ and $\boldsymbol{\theta}^*=(\theta_{11},\theta_{20},0)^T$, the transformed $T$-matrix $T(Q,\Theta-\boldsymbol{\theta}^*\mathbf{1}^T)$ is
\begin{align}
\begin{blockarray}{cccccccc}
p_{00,00} & p_{10,00} & p_{01,00} & p_{11,00} & p_{00,01} & p_{10,01} & p_{01,01} & p_{11,01}\\
\begin{block}{(cccccccc)}
\theta_{10}-\theta_{11} & 0 & \theta_{10}-\theta_{11} & 0 & \theta_{10}-\theta_{11} & 0 & \theta_{10}-\theta_{11} & 0\\
0 & 0 & \theta_{21}-\theta_{20} & \theta_{21}-\theta_{20} & 
0 & 0 & \theta_{21}-\theta_{20} & \theta_{21}-\theta_{20}\\
\theta_{j,00} & \theta_{j,00} & \theta_{j,00} & \theta_{j,00} & 
\theta_{j,01} & \theta_{j,01} & \theta_{j,01} & \theta_{j,01}\\
\end{block}
\end{blockarray},\nonumber \\
\begin{blockarray}{cccccccc}
p_{00,10} & p_{10,10} & p_{01,10} & p_{11,10} & p_{00,11} & p_{10,11} & p_{01,11} & p_{11,11}\\
\begin{block}{(cccccccc)}
\theta_{10}-\theta_{11} & 0 & \theta_{10}-\theta_{11} & 0 & \theta_{10}-\theta_{11} & 0 & \theta_{10}-\theta_{11} & 0\\
0 & 0 & \theta_{21}-\theta_{20} & \theta_{21}-\theta_{20} & 
0 & 0 & \theta_{21}-\theta_{20} & \theta_{21}-\theta_{20}\\
\theta_{j,10} & \theta_{j,10} & \theta_{j,10} & \theta_{j,10} & 
\theta_{j,11} & \theta_{j,11} & \theta_{j,11} & \theta_{j,11}\\
\end{block}
\end{blockarray}.
\end{align}

The choice of $\balpha$ corresponds to the equation 
\begin{align*}
\Big(D(\boldsymbol{\theta}^*)\cdot \big(P(\mathbf{R}\succeq \mathbf{r}&\mid Q,\Theta, \mathbf{p})\big)_{\mathbf{r}\in\{0,1\}^{3}}^T\Big)_{\sum_{k=1}^2\mathbf{e}_k+\mathbf{e}_j}\\
&= \left(t_{\mathbf{e}_1,\cdot}(Q, \Theta-\boldsymbol{\theta}^*\mathbf{1}^T)\odot t_{\mathbf{e}_2,\cdot}(Q, \Theta-\boldsymbol{\theta}^*\mathbf{1}^T)\odot t_{\mathbf{e}_j,\cdot}(Q, \Theta-\boldsymbol{\theta}^*\mathbf{1}^T) \right)^T \mathbf{p} \\
&= -(\theta_{10}-\theta_{11})(\theta_{20}-\theta_{21})\cdot (p^u_{01,00} \theta_{j,00} + p^u_{01,01} \theta_{j,01} + p^u_{01,10} \theta_{j,10} + p^u_{01,11} \theta_{j,11}).
\end{align*}
The cases $\balpha=(00),(10),(11)$ are handled similarly, together yielding four equations for $\theta_{j,00}, \theta_{j,01}, \theta_{j,10}$, and $\theta_{j,11}$.

\subsection{A Relaxation of the Structural Condition}
The proof of Theorem~\ref{main_theorem} does not fully exploit the information obtained in Step~1. In Step~2, we identify $(\boldsymbol{\mu},\Sigma)$ by considering every pair of latent attributes separately. However, once the item parameters have been identified in Step~1, it is unnecessary to construct same local models for all attribute pairs.

Indeed, it suffices to use only $K$ local structures to identify $\boldsymbol{\mu}$ and the diagonal entries of $\Sigma$.
The remaining off-diagonal entries can then be recovered from simpler local structures involving the corresponding attribute pairs.
Consequently, the structural condition in Theorem~\ref{main_theorem} can be relaxed: it is sufficient that one latent attribute be measured by at least three pure items, while every other latent attribute be measured by at least two pure items.

\begin{theorem}\label{relaxed_theorem}
The conclusion of Theorem~\ref{main_theorem}
continues to hold if $K\ge 2$ and the $Q$-matrix takes the form
\[
\begin{pmatrix}
I_K\\
I_K\\
Q'
\end{pmatrix},
\]
where $Q'$ contains a row equal to $\mathbf e_k$ for some $k\in\{1,\dots,K\}$.
\end{theorem}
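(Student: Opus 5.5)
Without loss of generality I will relabel so that the extra pure item in $Q'$ has $\mathbf q_j=\mathbf e_1$. Attribute $1$ is then measured by three pure items and every attribute $k\ge 2$ by two. I will keep the three-step architecture of Theorem~\ref{main_theorem} and change only how the anchors are obtained.

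\textbf{Step 1.} For each $k\ge 2$, take the five items consisting of the three pure items for attribute $1$ and the two pure items for attribute $k$. Their $Q$-submatrix is exactly $Q_1$ in \eqref{Q1} up to relabelling. Step~1 of Theorem~\ref{main_theorem} applies verbatim to this submodel: the $21\times 21$ Jacobian minor is nonzero at the point $(\Theta_0,\boldsymbol{\mu}_0,\Sigma_0)$, and it is real analytic in $(\Theta,\boldsymbol{\mu}_{1k},\Sigma_{1k,1k})$, so it is nonzero almost everywhere. This gives local identifiability of the item parameters of these five items and of the reduced probabilities $\mathbf p^u$ of that block. Running $k$ over $2,\dots,K$ identifies the parameters of all $2K+1$ pure items; the items for attribute $1$ are shared across the submodels. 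Since a finite union of null sets is null, generic finiteness holds simultaneously for all $k$.

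\textbf{Step 2.} Apply the computation of Subsection~\ref{mu_and_sigma} to each identified $\mathbf p^u$. From $P(o_1=1)$ and $P(o_1=1,o_2=1)$ we recover $(\mu_1,\sigma_{11})$. From $P(o_4=1)$ and $P(o_4=1,o_5=1)$ we recover $(\mu_k,\sigma_{kk})$; only two pure items for attribute $k$ are needed here. Since $\Phi_2$ is strictly increasing in its correlation argument, $P(o_1=1,o_4=1)$ then gives $\sigma_{1k}$.

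The remaining entries $\sigma_{kk'}$ with $2\le k<k'$ need no further five-item model. The item parameters of a pure item $a$ for $k$ and a pure item $b$ for $k'$ are already known, so the four probabilities $P(Y_a=y_a,Y_b=y_b)$ are a known invertible $2\times2$ Kronecker transform of the latent joint law of $(o_a,o_b)$. The invertibility follows from strict monotonicity $\theta_{a,1}\neq\theta_{a,0}$. This transform yields $P(o_a=1,o_b=1)=\Phi_2\bigl(-\mu_k/\sqrt{\sigma_{kk}+1},\,-\mu_{k'}/\sqrt{\sigma_{k'k'}+1},\,\sigma_{kk'}/\sqrt{(\sigma_{kk}+1)(\sigma_{k'k'}+1)}\bigr)$. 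With the marginal parameters known, monotonicity of $\Phi_2$ in $\rho$ identifies $\sigma_{kk'}$.

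\textbf{Step 3.} This is unchanged from Theorem~\ref{main_theorem}. Every remaining item $j$ is handled by the PM-GDINA argument using one identified pure item per attribute. The Gram matrix $\mathbf P$ is positive definite regardless of how the anchors were identified, so the linear system \eqref{eq_theta_j_all} again has a unique solution.

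The main obstacle is in Step 1, in assembling the local results coherently. The same attribute-$1$ items appear in $K-1$ submodels, and local identifiability of each submodel must be combined with the later algebraic steps to give finiteness of the full preimage almost everywhere on the full parameter space. I would handle this in three moves:
\begin{enumerate}[(a)]
\item Treat each submodel's exceptional set as the zero set of a real analytic function pulled back along a coordinate projection of the full parameter space, so each is null and their union is null.
\item Note that each submodel admits only finitely many alternative solutions.
\item Observe that Steps~2 and~3 map each such solution to at most one full parameter.
\end{enumerate}
The full preimage is then a finite union of finite sets. A smaller check is the monotonicity of $\Phi_2$ in $\rho$, which follows from Plackett's identity $\partial\Phi_2/\partial\rho=\phi_2>0$.
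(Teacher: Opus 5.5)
Your proposal is correct and follows essentially the same route as the paper: run Steps~1--2 on the five-item submodels for the pairs $(1,k)$, recover each remaining $\sigma_{kk'}$ from the joint law of one pure item for $k$ and one for $k'$ via the $\Phi_2$ formula, and reuse Step~3 unchanged. The differences are cosmetic. You invert the full Kronecker transform where the paper uses the $\boldsymbol{\theta}^*$-shift of Proposition~\ref{prop3_xu} to isolate $p_{11}$, and you spell out the strict monotonicity of $\Phi_2$ in $\rho$ and the finite-union bookkeeping that the paper leaves implicit.
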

\begin{proof}
Without loss of generality, , assume that $Q'$ contains the row $\mathbf e_1$. The arguments in Steps~1 and~2 can then be applied to the latent attribute pairs $(1,2),\dots,(1,K)$.
Consequently, the item parameters for items $1,\ldots,2K$, the copula mean vector $\boldsymbol{\mu}$, and the diagonal entries of $\Sigma$ are identifiable.

To identify an off-diagonal entry $\sigma_{kk'}$, consider the joint distribution of items $k$ and $k'$, together with the corresponding binary attribute realizations of length two. Since item $k$ requires only latent attribute $k$ and item $k'$ requires only latent attribute $k'$, the associated local structure depends only on the bivariate latent distribution of $(d_k,d_{k'})$. The corresponding $T$-matrix $T(I_2, \Theta)$ is of size $4\times 4$. Two of its rows, $t_{10,\cdot}$ and $t_{01,\cdot}$, are displayed in the following matrix.
\begin{align*}
\begin{blockarray}{cccc}
00 & 10 & 01 & 11 \\
p_{00} & p_{10} & p_{01} & p_{11}\\
\begin{block}{(cccc)}
\theta_{k0} & \theta_{k1} & \theta_{k0} & \theta_{k1}\\
\theta_{k'0} & \theta_{k'0} & \theta_{k'1} & \theta_{k'1}\\
\end{block}
\end{blockarray}.
\end{align*}
Choose $\boldsymbol{\theta}^*=(\theta_{k0}, \theta_{k'0})^T$. 
By Proposition~\ref{prop3_xu}, the vector $T(I_2, \Theta-\boldsymbol{\theta}^*\mathbf{1})\mathbf{p}$ is uniquely determined by the joint distribution of $(y_k, y_{k'})$. Notice that
\begin{align*}
T_{11,\cdot}(I_2,\Theta-\boldsymbol{\theta}^*\mathbf{1})\mathbf{p}&=\Big(T_{10,\cdot}(I_2,\Theta-\boldsymbol{\theta}^*\mathbf{1}) \odot T_{01,\cdot}(I_2,\Theta-\boldsymbol{\theta}^*\mathbf{1})\Big)\cdot\mathbf{p}\\
&=\Big((0, 0, \theta_{k1}-\theta_{k0}, \theta_{k1}-\theta_{k0})\odot (0, \theta_{k'1}-\theta_{k'0}, 0, \theta_{k'1}-\theta_{k'0})\Big) \cdot \mathbf p\\
&=\Big(0,0,0,(\theta_{k1}-\theta_{k0})(\theta_{k'1}-\theta_{k'0})\Big)\cdot \mathbf p\\
&= (\theta_{k1}-\theta_{k0})(\theta_{k'1}-\theta_{k'0})p_{11}.
\end{align*}
Since $\theta_{k1}-\theta_{k0}>0,\,\theta_{k'1}-\theta_{k'0}>0$, it follows that $p_{11}$ is uniquely identified.
Use the same arguments as in Step 2, we know that
\[
p_{11}=P(o_k=1,o_{k'}=1)=\Phi_2\left(-\frac{\mu_k}{\sqrt{\sigma_{kk}+1}}, -\frac{\mu_k}{\sqrt{\sigma_{k'k'}+1}}, \frac{\sigma_{kk'}}{\sqrt{\sigma_{kk}+1}\sqrt{\sigma_{k'k'}+1}}\right).
\]
Since $\mu_{k},\mu_{k'},\sigma_{kk},\sigma_{k'k'}$ have already been identified, the reduced latent class probability $p_{11}$ uniquely determines the off-diagonal entry $\sigma_{kk'}$.
Hence, $\sigma_{kk'}$ is identifiable.

The same argument applies to every pair of latent attributes. Therefore, all off-diagonal entries of $\Sigma$ are identifiable. The remainder of the proof is identical to Step~3 in the proof of Theorem~\ref{main_theorem}.
\end{proof}



\section{Applications}

\subsection{Extending Identifiability Results to Additive PM-CDMs}
The additive partial-mastery CDMs (aPM-CDMs) are proposed by \citet{cardenas2025generalized}, bridging the gap between PM-DINA and PM-GDINA models. The response probability given latent mastery score $\mathbf d$ is
\begin{equation}\label{eq_aPM-CDM}
\theta_{j,\mathbf{d}}=\delta_{j0} + \sum^K_{k=1}\delta_{jk}q_{jk} d_k, \qquad \delta_{jk}\geq 0,\,\delta_{j0}+\sum^K_{k=1}\delta_{jk}q_{jk}\leq 1.
\end{equation}
We next show that both PM-DINA and aPM-CDMs can be viewed as constrained PM-GDINA models through a reformulation of the PM-GDINA response probability. Consequently, the local identifiability result established for PM-GDINA also applies to aPM-CDMs.

To relate aPM-CDMs to PM-GDINA, we rewrite the PM-GDINA response function. Recall that
\begin{align}\label{theta_jd_expand}
\theta_{j,\mathbf d}
=
\sum_{\boldsymbol{\alpha}\in\{0,1\}^K}
\theta_{j,\boldsymbol{\alpha}}
\prod_{k=1}^K
d_k^{\alpha_k}(1-d_k)^{1-\alpha_k}.
\end{align}
Expanding \eqref{theta_jd_expand} in the interaction basis
\[
\phi(\mathbf d)=
\Bigl\{
\prod_{k\in S} d_k:
S\subseteq\{1,\ldots,K\}
\Bigr\}
\]
and collecting coefficients yields
\begin{align}
\theta_{j,\mathbf d}
=
\phi(\mathbf d)^T\cdot (M\boldsymbol{\theta}_j),
\label{PMCDM-irf}
\end{align}
where $\boldsymbol{\theta}_j$ is the $2^K$-dimensional vector of item parameters for item $j$, and $M$ is a $2^K\times2^K$ transformation matrix. The vector $M\boldsymbol{\theta}_j$
contains the coefficients associated with the interaction basis
$\phi(\mathbf d)$, whose coordinates are ordered according to the binary encoding of the corresponding subsets.

For example, when $K=2$, the ordering is
$00,01,10,11$, which corresponds to $\varnothing,\{2\},\{1\},\{1,2\}$. The response probability is
\begin{equation*}\label{expansion_K=2}
\begin{aligned}
\theta_{j,\mathbf{d}}&=
\boldsymbol{\theta}_j^T\cdot\mathbf{p}_{\balpha\mid \mathbf d} \\
&=\theta_{j,00}p_{00\mid \mathbf d}+\theta_{j,01}p_{01\mid \mathbf d}+\theta_{j,10}p_{10\mid \mathbf d}+\theta_{j,11}p_{11\mid\mathbf d}\\
&=\theta_{j,00}(1-d_1)(1-d_2) + \theta_{j,01}(1-d_1)d_2 + \theta_{j,10}d_1(1-d_2)+\theta_{j,11}d_1d_2\\
&=\theta_{j,00} + (\theta_{j,01}-\theta_{j,00}){d_2} + (\theta_{j,10}-\theta_{j,00}){d_1} + (\theta_{j,00}-\theta_{j,01}-\theta_{j,10}+\theta_{j,11}){d_1d_2}\\
&=(M\boldsymbol{\theta}_j)_{1}\cdot 1 + (M\boldsymbol{\theta}_j)_{2}\cdot d_2 +(M\boldsymbol{\theta}_j)_{3}\cdot d_1 +
(M\boldsymbol{\theta}_j)_{4}\cdot d_1d_2\\
&= (1, d_1, d_2, d_1d_2)\cdot (M\boldsymbol{\theta}_j).
\end{aligned}
\end{equation*}
This yields the transformation matrix
\[
M=
\begin{pmatrix}
1 & 0 & 0 & 0\\
-1 & 1 & 0 & 0\\
-1 & 0 & 1 & 0\\ 
1 & -1 & -1 & 1
\end{pmatrix}.
\]


The following lemma extends the above construction to arbitrary $K$.
\begin{lemma}\label{theta_d_repre}
The response probability of a PM-GDINA model, given the latent mastery score $\mathbf d$, can be written as
\[
\theta_{j,\mathbf d}
=
\phi(\mathbf d)^T\cdot(M\boldsymbol{\theta}_j),
\]
where the rows and columns of $M$ are indexed by
$\boldsymbol{\alpha}\in\{0,1\}^K$
in binary order, and
\[
M(\boldsymbol{\alpha}',\boldsymbol{\alpha})
=
\begin{cases}
(-1)^{|\boldsymbol{\alpha}|-|\boldsymbol{\alpha}'|},
&
\boldsymbol{\alpha}'\preceq\boldsymbol{\alpha},
\\[4pt] 
0,
&
\text{otherwise}.
\end{cases}
\]
\end{lemma}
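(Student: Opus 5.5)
We need to show $\theta_{j,\mathbf d}=\sum_{\boldsymbol\alpha}\theta_{j,\boldsymbol\alpha}\prod_k d_k^{\alpha_k}(1-d_k)^{1-\alpha_k}$ equals $\sum_{\boldsymbol\alpha'}\prod_{k:\alpha'_k=1}d_k\cdot\sum_{\boldsymbol\alpha}M(\boldsymbol\alpha',\boldsymbol\alpha)\theta_{j,\boldsymbol\alpha}$. Here we identify the interaction monomial $\prod_{k\in S}d_k$ with the binary vector $\boldsymbol\alpha'$ having $\alpha'_k=1$ iff $k\in S$. The plan is to expand each Bernoulli weight $p_{\boldsymbol\alpha\mid\mathbf d}$ in the monomial basis, then swap the order of summation and read off the coefficients.

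First, fix $\boldsymbol\alpha$. Write $p_{\boldsymbol\alpha\mid\mathbf d}=\prod_{k:\alpha_k=1}d_k\prod_{k:\alpha_k=0}(1-d_k)$. Expand the second product by the binomial/inclusion–exclusion identity $\prod_{k\in Z}(1-d_k)=\sum_{U\subseteq Z}(-1)^{|U|}\prod_{k\in U}d_k$, where $Z=\{k:\alpha_k=0\}$. Each resulting monomial is $\prod_{k\in S}d_k$ with $S=\{k:\alpha_k=1\}\cup U$, and this union is disjoint. In vector form this says $\boldsymbol\alpha'\succeq\boldsymbol\alpha$ with sign $(-1)^{|\boldsymbol\alpha'|-|\boldsymbol\alpha|}$. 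Hence
\[
p_{\boldsymbol\alpha\mid\mathbf d}=\sum_{\boldsymbol\alpha'\succeq\boldsymbol\alpha}(-1)^{|\boldsymbol\alpha'|-|\boldsymbol\alpha|}\prod_{k:\alpha'_k=1}d_k .
\]

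Second, substitute this into $\theta_{j,\mathbf d}=\sum_{\boldsymbol\alpha}\theta_{j,\boldsymbol\alpha}p_{\boldsymbol\alpha\mid\mathbf d}$ and interchange the finite sums. The coefficient of the monomial indexed by $\boldsymbol\alpha'$ is then $\sum_{\boldsymbol\alpha\preceq\boldsymbol\alpha'}(-1)^{|\boldsymbol\alpha'|-|\boldsymbol\alpha|}\theta_{j,\boldsymbol\alpha}$. This is the Möbius inversion on the Boolean lattice. Since $(-1)^{|\boldsymbol\alpha'|-|\boldsymbol\alpha|}=(-1)^{|\boldsymbol\alpha|-|\boldsymbol\alpha'|}$, the sign convention in the statement causes no trouble.

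The main obstacle is reconciling the index orientation with the stated formula. In the $K=2$ example, row $\{1\}$ of $M$ is $(-1,0,1,0)$, so the coefficient of $d_1$ involves $\theta_{j,\boldsymbol\alpha}$ with $\boldsymbol\alpha\preceq\boldsymbol\alpha'$. The nonzero condition is therefore that the column index lies below the row index, i.e.\ $M(\boldsymbol\alpha',\boldsymbol\alpha)\neq0$ iff $\boldsymbol\alpha\preceq\boldsymbol\alpha'$. The statement's literal condition $\boldsymbol\alpha'\preceq\boldsymbol\alpha$ corresponds to $M^T$, so the lemma should be read with the order relation oriented as in the example; I will state this explicitly. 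I will also check the formula against the displayed $K=2$ matrix to confirm the binary ordering convention.

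Optionally, I will note that $M$ is the $K$-fold Kronecker power of $\begin{pmatrix}1&0\\-1&1\end{pmatrix}$. This gives an inductive proof by factoring $p_{\boldsymbol\alpha\mid\mathbf d}$ over coordinates, since each univariate factor satisfies $(1-d,\,d)=(1,\,d)\begin{pmatrix}1&0\\-1&1\end{pmatrix}$.
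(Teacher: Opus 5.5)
Your proposal is correct and follows essentially the same route as the paper. Both arguments expand each factor $(1-d_k)$ as $1$ or $-d_k$, note that $\theta_{j,\boldsymbol{\alpha}}$ contributes only to monomials indexed by patterns $\succeq\boldsymbol{\alpha}$ with sign $(-1)^{|\boldsymbol{\alpha}'|-|\boldsymbol{\alpha}|}$, and collect coefficients. Your orientation remark is also right: the paper's proof writes the coefficient of the monomial indexed by $\boldsymbol{\alpha}$ as a signed sum over $\boldsymbol{\alpha}'\preceq\boldsymbol{\alpha}$, which gives the lower-triangular $M$ of the $K=2$ display. That matrix is the transpose of what the statement's $M(\boldsymbol{\alpha}',\boldsymbol{\alpha})$ literally defines under the usual row--column reading.
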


\begin{proof}
From \eqref{theta_jd_expand}, each term involving
$\theta_{j,\boldsymbol{\alpha}}$
contains the factor $\prod_{k=1}^K d_k^{\alpha_k}$. Therefore, after expanding
$\prod_{k=1}^K d_k^{\alpha_k}(1-d_k)^{1-\alpha_k}$, the monomials receiving contributions from $\theta_{j,\boldsymbol{\alpha}}$
are precisely those indexed by
$\boldsymbol{\alpha}'\succeq\boldsymbol{\alpha}$.
Equivalently, for a fixed interaction pattern $\boldsymbol{\alpha}$,
the coefficient of $\prod_{k=1}^K d_k^{\alpha_k}$ depends only on $\{\theta_{j,\boldsymbol{\alpha}'}:
\boldsymbol{\alpha}'\preceq\boldsymbol{\alpha}\}$.

Each factor $(1-d_k)$ contributes either $1$ or $-d_k$. When
$\boldsymbol{\alpha}'\preceq\boldsymbol{\alpha}$,
the interaction term $\prod_{k=1}^K d_k^{\alpha_k}$ is obtained from $\prod_{k=1}^K d_k^{\alpha'_k}(1-d_k)^{1-\alpha'_k}$ by selecting the factor $-d_k$ whenever $\alpha_k=1$ and $\alpha'_k=0$,
and selecting the factor $1$ whenever
$\alpha_k=\alpha'_k=0$.
Therefore, the corresponding coefficient is
\[
(-1)^{|\boldsymbol{\alpha}|-|\boldsymbol{\alpha}'|}.
\]
This proves the stated expression for $M$.
\end{proof}

In a PM-DINA model, only the coefficients corresponding to the constant term and the interaction term involving all required attributes are nonzero. Consequently, all other $2^K-2$ entries of $M\boldsymbol{\theta}_j$ are zero.
For aPM-CDMs defined by~\eqref{eq_aPM-CDM}, only the constant term and the first-order interaction terms may have nonzero coefficients. Equivalently, $(M\boldsymbol{\theta}_j)_{r}=0$ whenever the corresponding interaction feature involves more than one latent attribute.

Therefore, both PM-DINA and aPM-CDMs can be viewed as constrained subclasses of PM-GDINA. Because identifiability is preserved under these parameter restrictions, Theorem~\ref{main_theorem} immediately yields the following corollary.
\begin{corollary}
If an aPM-CDM, defined by the response probability parameterization~\eqref{eq_aPM-CDM}, satisfies one of the conditions in Theorem~\ref{main_theorem} or Theorem~\ref{relaxed_theorem}, then the model is locally identifiable.
\end{corollary}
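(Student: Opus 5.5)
The plan is to reduce the corollary to Theorem~\ref{main_theorem} and Theorem~\ref{relaxed_theorem} through Lemma~\ref{theta_d_repre}. The key observation is that $M$ is invertible: it is triangular with respect to the partial order $\preceq$ and has unit diagonal, being the Möbius matrix of the Boolean lattice.

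\textbf{Step 1: embed the aPM-CDM in PM-GDINA.} For each item $j$, write the aPM-CDM coefficients as a vector $\boldsymbol{\delta}_j\in\RR^{2^K}$ indexed by the interaction basis $\phi(\mathbf d)$. Its nonzero entries are allowed only at the constant term and at the singletons $\{k\}$ with $q_{jk}=1$. Set $\boldsymbol{\theta}_j:=M^{-1}\boldsymbol{\delta}_j$. Explicitly, $\theta_{j,\boldsymbol{\alpha}}=\delta_{j0}+\sum_k \delta_{jk}q_{jk}\alpha_k$, which is $\theta_{j,\mathbf d}$ evaluated at $\mathbf d=\boldsymbol{\alpha}$.

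By Lemma~\ref{theta_d_repre}, this PM-GDINA model reproduces exactly the aPM-CDM response function $\theta_{j,\mathbf d}$. Hence it reproduces the same likelihood \eqref{pmcdm_lh}. The constraints $\delta_{jk}\ge0$ and $\delta_{j0}+\sum_k\delta_{jk}q_{jk}\le1$ make these $\theta_{j,\boldsymbol{\alpha}}$ valid probabilities. They also satisfy monotonicity and invariance to non-required attributes. Strict monotonicity for pure items, $\theta_{j,\mathbf e_k}-\theta_{j,\mathbf 0}=\delta_{jk}>0$, holds on the interior where $\delta_{jk}>0$.

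\textbf{Step 2: the restricted parameter set.} The map $\iota:(\boldsymbol{\delta},\boldsymbol{\mu},\Sigma)\mapsto(\Theta,\boldsymbol{\mu},\Sigma)$ is linear and injective, so it is a bijection onto a linear slice $S$ of the PM-GDINA parameter space. The aPM-CDM parameterization factors as $\phi_{\rm PM\text{-}GDINA}\circ\iota$. Consequently, for any aPM-CDM parameter $\eta$, the fiber $(\phi\circ\iota)^{-1}(\phi\circ\iota(\eta))$ is in bijection with $\phi^{-1}(\phi(\iota\eta))\cap S$. This set is finite whenever the full PM-GDINA fiber is finite.

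\textbf{Main obstacle.} Theorems~\ref{main_theorem} and~\ref{relaxed_theorem} give finiteness only for \emph{almost all} PM-GDINA parameters, while $S$ is a measure-zero slice. So the claim ``identifiability is preserved under restriction'' must be checked on $S$ itself. I would do this by retracing the three proof steps restricted to $S$.

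For Steps 2 and 3, the arguments are pointwise rather than generic. The copula parameters are recovered from $\mathbf p^u$ by the injective formulas of Section~\ref{mu_and_sigma}. The remaining item parameters are recovered by the linear system \eqref{eq_theta_j_all}, whose Gram matrix is positive definite for every $(\boldsymbol{\mu},\Sigma)$.

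For Step 1, the rank-deficiency set is the zero set of a real-analytic minor. I would pull this minor back to the aPM-CDM parameters $(\delta,\boldsymbol{\mu},\Sigma)$ on pure items, where the parameters $(\theta_{j0},\theta_{j1})=(\delta_{j0},\delta_{j0}+\delta_{j1})$ range over an open set. Pure items carry no restriction from the additive structure, since PM-GDINA and aPM agree for single-attribute items. Therefore the submodel of Example~\ref{eg_5items} is literally the same, and full rank holds off a null set. Only the non-pure items of $Q'$ are restricted, and those are handled by the pointwise Step 3.

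Combining these gives local identifiability of the aPM-CDM, with finitely many preimages for almost all parameters.
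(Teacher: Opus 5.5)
Your proposal is correct, and it reaches the corollary by a more careful route than the paper does. The paper's entire proof is one observation following Lemma~\ref{theta_d_repre}: aPM-CDMs are constrained subclasses of PM-GDINA, and ``identifiability is preserved under these parameter restrictions.'' You use the same embedding. Your vertex formula $\theta_{j,\boldsymbol{\alpha}}=\delta_{j0}+\sum_k\delta_{jk}q_{jk}\alpha_k$ is exactly $M^{-1}\boldsymbol{\delta}_j$.

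You also correctly point out that restriction transfers finiteness only fiber by fiber. Local identifiability here is an almost-everywhere statement. The aPM slice $S$ is Lebesgue-null in the PM-GDINA parameter space as soon as $Q'$ contains an item requiring two or more attributes. So the statement of Theorem~\ref{main_theorem} alone does not give finite fibers at almost every point of $S$.

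You resolve this by locating where genericity enters the proof. It enters only through the Step~1 Jacobian/analyticity argument, which involves only pure items and $(\boldsymbol{\mu},\Sigma)$. On those coordinates the additive restriction is vacuous, because $(\delta_{j0},\delta_{jk})\mapsto(\theta_{j0},\theta_{j1})$ is a linear bijection onto the same region. Step~2, the off-diagonal step of Theorem~\ref{relaxed_theorem}, and the Gram-matrix Step~3 all hold at every parameter value. The paper faces the same issue when passing from $\Omega$ to the copula slice $\Omega'$ in Step~1, and your fix mirrors that argument.

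The trade-off is as follows. The paper's one-liner is a black-box corollary and would be airtight for global identifiability, but it leaves the null-slice issue implicit. Yours depends on the internal structure of the proof rather than only on the theorem's statement. In exchange, it:
\begin{itemize}
\item actually justifies the almost-all quantifier on the aPM parameter space;
\item shows that the exceptional set is the pullback of the pure-item exceptional set;
\item checks that the embedded parameters satisfy the standing monotonicity, invariance, and strict pure-item monotonicity assumptions, which the paper does not verify.
\end{itemize}
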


\subsection{Examples of Real $Q$-Matrices}
In this subsection, we examine the $Q$-matrices in the data analyses of \citet{shang2021partial}. 
We summarize the results here and provide the corresponding $Q$-matrices in Appendix~X. Notice that we only propose sufficient identifiability conditions in this work.  Failure to satisfy these conditions does not necessarily imply that the model is not identifiable.

The fraction subtraction dataset \citep{tatsuoka1990toward} contains 20 items and 8 latent attributes. Only 3 items (6,8 and 9) require a single latent attribute, whereas all remaining items require at least two latent attributes. Consequently, the structure condition in Theorem~\ref{main_theorem} is not satisfied. Therefore, the identifiability result established in this paper does not directly apply to this dataset.

Another dataset considered by \citet{shang2021partial} is the 2003-2004 grammar section of the Examination for the Certificate of Proficiency in English (ECPE) test \citep{liu2009testing}. The dataset contains 28 items and 3 latent attributes: morphosyntactic form (\textit{Morph.}), cohesive form (\textit{Coh.}), and lexical form (\textit{Lex.}). 
For each latent attribute, there exist at least three pure items requiring only that attribute: 10, 13, and 14 for \textit{Morph.}; 8, 23, and 24 for \textit{Coh.}; and 4, 5, and 6 for \textit{Lex.}. Therefore, the structural condition in Theorem~\ref{main_theorem} is satisfied, and the local identifiability result established in this paper applies to this $Q$-matrix.


\section{Discussion}
In this paper, we developed the first identifiability results for partial-mastery cognitive diagnostic models (PM-CDMs).
Starting from a carefully chosen low-dimensional submodel, we used the Jacobian criterion to establish local identifiability and showed that this result can be extended to derive sufficient conditions for local identifiability of general PM-CDMs. Specifically, we established that the parametrization map of a PM-CDM is generically finite-to-one under suitable conditions. This result rules out continuous non-identifiability and implies that any ambiguity in parameter recovery is confined to a finite number of parameter points.

Our results contribute to the theoretical foundations of PM-CDMs and extend existing identifiability theory for standard CDMs, such as that of \citet{xu2017identifiability}. In particular, our sufficient conditions require that the $Q$-matrix contain two copies of the identity matrix and one additional pure item, whereas existing sufficient conditions for standard CDMs require only the former. Thus, identifiability in PM-CDMs is obtained under slightly stronger structural assumptions. Moreover, our results establish generic finite-to-one identifiability, whereas standard CDMs have been shown to satisfy global one-to-one identifiability under analogous conditions. This distinction reflects the additional flexibility introduced by PM-CDMs through continuous latent mastery variables.

The proposed conditions also have implications for assessment design. In particular, each latent attribute must be measured by at least two pure items, and one latent attribute must be measured by a third pure item. From a practical perspective, this highlights the importance of carefully designing item-attribute relationships when constructing assessments intended for PM-CDM analysis. 

Several important questions remain open. First, our results do not establish global one-to-one identifiability. Determining the exact number of observationally equivalent parameter points may be computationally challenging and, in some cases, infeasible. Second, the sufficient conditions developed here may be restrictive, particularly in assessments with a limited number of items or complex attribute structures. Furthermore, these conditions are unlikely to be necessary. Future research may seek to strengthen the present results by establishing global identifiability or by deriving weaker sufficient conditions for local identifiability. For example, proving that the key submodel in Step 1 is itself globally identifiable would immediately imply stronger identifiability results for the full PM-CDM. 
It would also be of interest to identify alternative local structures that could replace the current construction and thereby relax the requirement that the $Q$-matrix contain two identity sub-matrices and an additional pure item.

Overall, the present work provides a theoretical foundation for PM-CDMs by showing that identifiability can be established despite the substantially richer latent structure induced by partial mastery. More generally, in this paper, we illustrate how algebraic and analytic techniques can be combined to study identifiability in latent variable models with complex continuous-discrete structures. 


\section*{Online Supplementary Material}

Accompanying Macaulay2 code for the calculation of the rational map in Example 1 in Section~\ref{sec_theorem} is available at 
\href{https://osf.io/xsc9p/overview?view_only=65dad506447a4c73bf833ac9db480389}{https://osf.io/xsc9p/overview?view\_only=65dad506447a4c73bf833ac9db480389}.

\section*{Acknowledgement}

The work was funded by projects "Research of Excellence on Digital Technologies and Wellbeing CZ.02.01.01/00/22\_008/0004583'' and ``MSCA Fellowships on ICS CZ.02.01.01/00/22\_010/0012891'', which are co-financed by the European Union, by the Czech Science Foundation grant ``Complex analysis of educational measurement data to understand cognitive demands of assessment tasks'' number 25-16951S, and by the institutional support RVO 67985807.  
In preparing this manuscript, ChatGPT (OpenAI, 2026) was used for language editing and stylistical refinement. All content, including intellectual contributions, theoretical development, analysis, and interpretation, was developed by the authors. All AI-generated suggestions were reviewed, edited, and approved by the authors, who take full responsibility for the final content.


\bibliography{ref}

\newpage


\appendix
\section{The matrix $T_{I_5,\cdot}(Q,\Theta)$}\label{TI5}
The matrix  $T_{I_5,\cdot}(Q,\Theta)$ can be partitioned into four blocks of eight columns each.
\[
T_{I_5,\cdot}(Q,\Theta)=(T_1 \mid T_2 \mid T_3 \mid T_4),
\]
where
\begin{align*}
T_1=
\begin{blockarray}{cccccccc}
00000 & 10000 & 01000 & 11000 & 00100 & 10100 & 01100 & 11100\\
p^u_{00} & p^u_{10} & p^u_{10} & p^u_{20} & p^u_{10} & p^u_{20} & p^u_{20} & p^u_{30}\\
\begin{block}{(cccccccc)}
\theta_{10} & \theta_{11} & \theta_{10} & \theta_{11} & \theta_{10} & \theta_{11} & \theta_{10} & \theta_{11}\\
\theta_{20} & \theta_{20} & \theta_{21} & \theta_{21} & 
\theta_{20} & \theta_{20} & \theta_{21} & \theta_{21}\\
\theta_{30} & \theta_{30} & \theta_{30} & \theta_{30} & 
\theta_{31} & \theta_{31} & \theta_{31} & \theta_{31}\\
\theta_{40} & \theta_{40} & \theta_{40} & \theta_{40} & \theta_{40} & \theta_{40} & \theta_{40} & \theta_{40}\\
\theta_{50} & \theta_{50} & \theta_{50} & \theta_{50} & \theta_{50} & \theta_{50} & \theta_{50} & \theta_{50}\\
\end{block}
\end{blockarray},
\end{align*}
\begin{align*}
T_2=
\begin{blockarray}{cccccccc}
00010 & 10010 & 01010 & 11010 & 00110 & 10110 & 01110 & 11110\\
p^u_{01} & p^u_{11} & p^u_{11} & p^u_{21} & p^u_{11} & p^u_{21} & p^u_{21} & p^u_{31}\\
\begin{block}{(cccccccc)}
\theta_{10} & \theta_{11} & \theta_{10} & \theta_{11} & \theta_{10} & \theta_{11} & \theta_{10} & \theta_{11}\\
\theta_{20} & \theta_{20} & \theta_{21} & \theta_{21} & 
\theta_{20} & \theta_{20} & \theta_{21} & \theta_{21}\\
\theta_{30} & \theta_{30} & \theta_{30} & \theta_{30} & 
\theta_{31} & \theta_{31} & \theta_{31} & \theta_{31}\\
\theta_{41} & \theta_{41} & \theta_{41} & \theta_{41} & \theta_{41} & \theta_{41} & \theta_{41} & \theta_{41}\\
\theta_{50} & \theta_{50} & \theta_{50} & \theta_{50} & \theta_{50} & \theta_{50} & \theta_{50} & \theta_{50}\\
\end{block}
\end{blockarray},
\end{align*}
\begin{align*}
T_3=
\begin{blockarray}{cccccccc}
00001 & 10001 & 01001 & 11001 & 00101 & 10101 & 01101 & 11101\\
p^u_{01} & p^u_{11} & p^u_{11} & p^u_{21} & p^u_{11} & p^u_{21} & p^u_{21} & p^u_{31}\\
\begin{block}{(cccccccc)}
\theta_{10} & \theta_{11} & \theta_{10} & \theta_{11} & \theta_{10} & \theta_{11} & \theta_{10} & \theta_{11}\\
\theta_{20} & \theta_{20} & \theta_{21} & \theta_{21} & 
\theta_{20} & \theta_{20} & \theta_{21} & \theta_{21}\\
\theta_{30} & \theta_{30} & \theta_{30} & \theta_{30} & 
\theta_{31} & \theta_{31} & \theta_{31} & \theta_{31}\\
\theta_{40} & \theta_{40} & \theta_{40} & \theta_{40} & \theta_{40} & \theta_{40} & \theta_{40} & \theta_{40}\\
\theta_{51} & \theta_{51} & \theta_{51} & \theta_{51} & \theta_{51} & \theta_{51} & \theta_{51} & \theta_{51}\\
\end{block}
\end{blockarray},
\end{align*}
\begin{align*}
T_4=
\begin{blockarray}{cccccccc}
00011 & 10011 & 01011 & 11011 & 00111 & 10111 & 01111 & 11111\\
p^u_{02} & p^u_{12} & p^u_{12} & p^u_{22} & p^u_{12} & p^u_{22} & p^u_{22} & p^u_{32}\\
\begin{block}{(cccccccc)}
\theta_{10} & \theta_{11} & \theta_{10} & \theta_{11} & \theta_{10} & \theta_{11} & \theta_{10} & \theta_{11}\\
\theta_{20} & \theta_{20} & \theta_{21} & \theta_{21} & 
\theta_{20} & \theta_{20} & \theta_{21} & \theta_{21}\\
\theta_{30} & \theta_{30} & \theta_{30} & \theta_{30} & 
\theta_{31} & \theta_{31} & \theta_{31} & \theta_{31}\\
\theta_{41} & \theta_{41} & \theta_{41} & \theta_{41} & \theta_{41} & \theta_{41} & \theta_{41} & \theta_{41}\\
\theta_{51} & \theta_{51} & \theta_{51} & \theta_{51} & \theta_{51} & \theta_{51} & \theta_{51} & \theta_{51}\\
\end{block}
\end{blockarray}.
\end{align*}


\section{$Q$-matrices of the Data Examples in \citet{shang2021partial}}
\begin{table}[ht]
\centering
\label{tab:qmatrices}
\renewcommand{\arraystretch}{1.3} 

\begin{minipage}[t]{0.58\textwidth}
\centering
\textbf{(a) Fraction Subtraction}

\vspace{1em}
\begin{tabular}{c|cccccccc}
\toprule
Item & A1 & A2 & A3 & A4 & A5 & A6 & A7 & A8\\
\midrule
1  & 0 & 0 & 0 & 1 & 0 & 1 & 1 & 0 \\
2  & 0 & 0 & 0 & 1 & 0 & 0 & 1 & 0 \\
3  & 0 & 0 & 0 & 1 & 0 & 0 & 1 & 0 \\
4  & 0 & 1 & 1 & 0 & 1 & 0 & 1 & 0 \\
5  & 0 & 1 & 0 & 1 & 0 & 0 & 1 & 1 \\
6  & 0 & 0 & 0 & 0 & 0 & 0 & 1 & 0 \\
7  & 1 & 1 & 0 & 0 & 0 & 0 & 1 & 0 \\
8  & 0 & 0 & 0 & 0 & 0 & 0 & 1 & 0 \\
9  & 0 & 1 & 0 & 0 & 0 & 0 & 0 & 0 \\
10 & 0 & 1 & 0 & 0 & 1 & 0 & 1 & 1 \\
11 & 0 & 1 & 0 & 0 & 1 & 0 & 1 & 0 \\
12 & 0 & 0 & 0 & 0 & 0 & 0 & 1 & 1 \\
13 & 0 & 1 & 0 & 1 & 1 & 0 & 1 & 0 \\
14 & 0 & 1 & 0 & 0 & 0 & 0 & 1 & 0 \\
15 & 1 & 0 & 0 & 0 & 0 & 0 & 1 & 0 \\
16 & 0 & 1 & 0 & 0 & 0 & 0 & 1 & 0 \\
17 & 0 & 1 & 0 & 0 & 1 & 0 & 1 & 0 \\
18 & 0 & 1 & 0 & 0 & 1 & 1 & 1 & 0 \\
19 & 1 & 1 & 1 & 0 & 1 & 0 & 1 & 0 \\
20 & 0 & 1 & 1 & 0 & 1 & 0 & 1 & 0 \\
\bottomrule
\end{tabular}
\end{minipage}
\hfill
\begin{minipage}[t]{0.35\textwidth}
\centering
\textbf{(b) ECPE}

\vspace{1em}
\begin{tabular}{c|ccc}
\toprule
Item & Morph. & Coh. & Lex.\\
\midrule
1  & 1 & 1 & 0\\
2  & 0 & 1 & 0\\
3  & 1 & 0 & 1\\
4  & 0 & 0 & 1\\
5  & 0 & 0 & 1\\
6  & 0 & 0 & 1\\
7  & 1 & 0 & 1\\
8  & 0 & 1 & 0\\
9  & 0 & 0 & 1\\
10 & 1 & 0 & 0\\
11 & 1 & 0 & 1\\
12 & 1 & 0 & 1\\
13 & 1 & 0 & 0\\
14 & 1 & 0 & 0\\
15 & 0 & 0 & 1\\
16 & 1 & 0 & 1\\
17 & 0 & 1 & 1\\
18 & 0 & 0 & 1\\
19 & 0 & 0 & 1\\
20 & 1 & 0 & 1\\
21 & 1 & 0 & 1\\
22 & 0 & 0 & 1\\
23 & 0 & 1 & 0\\
24 & 0 & 1 & 0\\
25 & 1 & 0 & 0\\
26 & 0 & 0 & 1\\
27 & 1 & 0 & 0\\
28 & 0 & 0 & 1\\
\bottomrule
\end{tabular}
\end{minipage}

\end{table}





\end{document}